\newtheorem{theorem}{Theorem}[section]
\newtheorem{remark}{Remark}[section]
\newtheorem{corollary}{Corollary}[theorem]
\theoremstyle{definition}
\newtheorem{definition}{Definition}[section]
\newtheorem{example}{Example}[section]
\title{On the Topology $\tau^{\diamond}_R$ of Primal Topological Spaces}
\author{Murad ÖZKOÇ$^1$, Büşra KÖSTEL$^2$\\
$^1$Mu\u{g}la S{\i}tk{\i} Ko\c{c}man University, 
Faculty of Science, \\Department of Mathematics  48000,
Mente\c{s}e-Mu\u{g}la, Turkey\\
$^2$Mu\u{g}la S{\i}tk{\i} Ko\c{c}man University,\\ 
Graduate School of Natural and Applied Sciences, \\Mathematics, 48000, Mente\c{s}e-Mu\u{g}la, Turkey\\
$e$-mails: $^1$murad.ozkoc@mu.edu.tr\\
$^2$buslakostel@gmail.com}
\begin{document}
\maketitle
\begin{abstract}
The main purpose of this paper is to introduce and study two new operators $(\cdot)_R^{\diamond}$ and $cl_R^{\diamond}(\cdot)$ via primal which is a new notion. We also show that the operator $cl_R^{\diamond}(\cdot)$ is a Kuratowski closure operator, while the operator $(\cdot)_R^{\diamond}$ is not.
In addition, we prove that the topology on $X$, shown as $\tau_R^{\diamond},$ obtained by means of the operator $cl_R^{\diamond}(\cdot)$ is finer than $\tau_{\delta},$ where $\tau_{\delta}$ is the family of $\delta$-open subsets of a space $(X,\tau).$ Moreover, we not only obtain a base for the topology $\tau_R^{\diamond}$ but also prove many fundamental results concerning this new structure. Furthermore, we give many counterexamples related to our results. \\  
{\bf 2020 AMS Classifications:} 54A05;  54B99; 54C60.\\
{\bf Keywords:} Primal, primal topological space, Kuratowski closure operator, the operator $(\cdot)_R^{\diamond}$, the operator $cl^{\diamond}_R$.

\end{abstract}

\section{Introduction}
The desire to obtain valid solutions to many issues in topology, such as compactification, proximity space and closure space problems, has led to the introduction of some new structures by topologists. Some of these structures are nets, filters \cite{Ku}, ideals \cite{Ja} and grills \cite{ch}. The classical structures such as nets, filters, ideals, and grills are undoubtedly some of the most important objects of topology. Kuratowski defined and studied the idea of ideal from filter \cite{Ku}. Also, this notion has been studied by many topologists in different directions in \cite{m1, m2}. The notion of ideal comes across as the dual structure of filter. Also, the other classical structure of general topology in the literature is the notion of grill.  It was introduced by Choquet \cite{ch} in 1947 and studied by many authors in \cite{wj, b1, b2, b3, b4, m1, m2, ro, Na}. Recently, Acharjee et al. introduced a new classical structure called primal \cite{aoi}. They define the notion of primal topological space by utilizing two new operators and investigate many fundamental properties of this new structure and these two operators. Moreover, the notion of primal is the dual structure of grill. Furthermore, some new studies have been revealed regarding primal topological spaces in \cite{ ahm, hanan, ahmad, ahmadandmesfer} since the introduction of primal.
\section{Preliminaries}
Throughout this paper, $(X,\tau)$ and $(Y,\sigma)$ (briefly, $X$ and $Y$) represent topological spaces unless otherwise stated. We denote the closure and interior of a subset $A$ of a space $X$ by $cl(A)$ and $int(A)$, respectively. Also, the powerset of a set $X$ will be denoted by $2^X.$ The family of all open neighborhoods of a point $x$ of $X$ will be denoted by $O(X,x).$ \\

A subset $A$ of a space $X$ is called regular open \cite{sto} if $A=int(cl(A)).$ The complement of a regular open set is called regular closed \cite{sto}. The union of all regular open subsets of $X$ contained in $A$ is called $\delta$-interior of $A$ and denoted by $\delta\text{-}int(A).$ Dually, the intersection of all regular closed subsets of $X$ containing $A$ of a space $X$ is called $\delta$-closure of $A$ and denoted by $\delta\text{-}cl(A).$ A subset $A$ of a space $X$ is called $\delta$-open \cite{vel} if $A=\delta\text{-}int(A).$ The complement of a $\delta$-open set in a space $X$ is called $\delta$-closed \cite{vel}. The family of all regular open \cite{sto} (resp. regular closed \cite{sto}, $\delta$-open \cite{vel}, $\delta$-closed \cite{vel}) subsets of a space $X$ will be denoted by $RO(X)$ $(\text{resp. } RC(X), \ \delta O(X), \ \delta C(X)).$ The family of all regular open (resp. regular closed, $\delta$-open, $\delta$-closed) sets of $X$ containing a point $x$ of $X$ is denoted by $RO(X,x)$ (resp. $RC(X,x)$, $\delta O(X,x)$, $\delta C(X,x)).$ 
\\

Now, we recall the following results concerning $\delta$-interior and $\delta$-closure of a set $A$ in a topological space $X$.
\\
\begin{theorem} \cite{vel}
Let $X$ be a topological space and $A\subseteq X.$ Then the followings hold.\\

$(a)$ $\delta\text{-}int(A)=\{x|(\exists U\in RO(X,x))(U\subseteq A)\}=\{x|(\exists U\in O(X,x))(int(cl(U))\subseteq A)\},$
\\

$(b)$ $\delta\text{-}cl(A)=\{x|(\forall U\in RO(X,x))(U\cap A\neq\emptyset)\}=\{x|(\forall U\in O(X,x))(int(cl(U))\cap A\neq\emptyset)\},$\\

$(c)$ $\delta\text{-}cl(A^c)=(\delta\text{-}int(A))^c,$\\

$(d)$ $\delta\text{-}int(A^c)=(\delta\text{-}cl(A))^c.$
\end{theorem}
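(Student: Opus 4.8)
The plan is to prove part $(a)$ directly from the definition, then obtain the complementation identities $(c)$ and $(d)$ from the duality between regular open and regular closed sets, and finally deduce $(b)$ from $(a)$ together with $(d)$. First, for $(a)$, I would simply unwind the definition $\delta\text{-}int(A)=\bigcup\{U\in RO(X):U\subseteq A\}$: a point $x$ lies in this union precisely when some regular open $U$ satisfies $x\in U\subseteq A$, i.e. when there is $U\in RO(X,x)$ with $U\subseteq A$, which is the first equality. For the second equality, the engine is the standard fact that for every open set $U$ the set $int(cl(U))$ is regular open and satisfies $U\subseteq int(cl(U))$. Indeed, if $U\in RO(X,x)$ with $U\subseteq A$, then $U$ is open and $int(cl(U))=U\subseteq A$, so $U$ witnesses membership in the right-hand set; conversely, if $U\in O(X,x)$ with $int(cl(U))\subseteq A$, then $V:=int(cl(U))$ is regular open, contains $x$ (since $U\subseteq int(cl(U))$), and is contained in $A$, so $V\in RO(X,x)$ witnesses membership in the middle set.

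Next, for $(c)$, I would complement the defining union. Since $U\mapsto U^{c}$ is an inclusion-reversing bijection between $RO(X)$ and $RC(X)$, taking complements in $\delta\text{-}int(A)=\bigcup\{U\in RO(X):U\subseteq A\}$ and applying De Morgan converts the union of regular open subsets of $A$ into the intersection of regular closed supersets of $A^{c}$, which is exactly $\delta\text{-}cl(A^{c})$. Substituting $A^{c}$ for $A$ in $(c)$ and complementing both sides then gives $(d)$ at once.

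Finally, for $(b)$, I would combine $(a)$ and $(d)$. Writing $\delta\text{-}cl(A)=(\delta\text{-}int(A^{c}))^{c}$ via $(d)$ and inserting the two descriptions of $\delta\text{-}int(A^{c})$ supplied by $(a)$, the outer complement turns the existential quantifier into a universal one and replaces the condition $U\subseteq A^{c}$ (respectively $int(cl(U))\subseteq A^{c}$) by $U\cap A\neq\emptyset$ (respectively $int(cl(U))\cap A\neq\emptyset$), yielding both equalities in $(b)$ simultaneously.

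The step I expect to require the most care is the second equality in $(a)$, since it rests on the regularity of $int(cl(U))$ for open $U$ — equivalently the idempotency $int(cl(int(cl(U))))=int(cl(U))$ — together with the inclusion $U\subseteq int(cl(U))$. Once that lemma is in hand, the remaining parts are formal manipulations of the defining union and intersection under complementation, so no further obstacle is anticipated.
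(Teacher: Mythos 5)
Your proposal is correct. Note that the paper does not prove this statement at all: it is recalled from Veli\v{c}ko's work and cited without proof, so there is no in-paper argument to compare against. Your route --- unwinding the defining union for $(a)$, using that $U\mapsto U^{c}$ is an inclusion-reversing bijection between $RO(X)$ and $RC(X)$ to get $(c)$ and $(d)$ by De Morgan, and then negating the existential in $(a)$ to obtain $(b)$ --- is the standard derivation, and the one nontrivial ingredient you isolate (that $int(cl(U))$ is regular open and contains $U$ for every open $U$, via the idempotency $int(cl(int(cl(U))))=int(cl(U))$) is exactly the right lemma and is used correctly in both directions of the second equality in $(a)$.
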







\begin{definition} \cite{aoi}
Let $X$ be a non-emptyset. A collection $\mathcal{P}\subseteq   2^X$  is called a primal on $X$ if it satisfies the following conditions:\\

$a)$ $X \notin \mathcal{P}$,\\

$b)$ if $A\in \mathcal{P}$ and $B\subseteq A$, then $B\in \mathcal{P}$,\\

$c)$ if $A\cap B\in \mathcal{P}$, then $A\in \mathcal{P}$ or $B\in \mathcal{P}$.\\
\end{definition}

\begin{corollary} \cite{aoi}
Let $X$ be a non-emptyset. A collection  $\mathcal{P}\subseteq   2^X$  is a primal on $X$ if and only if it satisfies the following conditions:\\  

$a)$ $X \notin \mathcal{P}$,\\

$b)$ if $B\notin \mathcal{P}$ and $B\subseteq A$, then $A\notin \mathcal{P}$,\\

$c)$ if $A\notin \mathcal{P}$ and $B\notin \mathcal{P},$ then $A\cap B\notin \mathcal{P}.$\\
\end{corollary}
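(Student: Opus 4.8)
The plan is to establish the biconditional by showing that each of the three conditions in the Definition is logically equivalent to the correspondingly-labelled condition in the Corollary. Since condition $(a)$ is literally identical in the two lists, only $(b)$ and $(c)$ require attention, and in each case the two formulations will turn out to be contrapositives of one another.

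First I would treat condition $(b)$. Fixing two subsets $A, B$ of $X$ with $B \subseteq A$, the Definition's clause $(b)$ asserts the implication $A \in \mathcal{P} \Rightarrow B \in \mathcal{P}$, whereas the Corollary's clause $(b)$ asserts $B \notin \mathcal{P} \Rightarrow A \notin \mathcal{P}$. Holding the containment $B \subseteq A$ as a common side hypothesis, these two implications are precisely contrapositives of each other, so each holds for all such pairs exactly when the other does. Thus the Definition's $(b)$ and the Corollary's $(b)$ are equivalent. The only point requiring care is to quantify correctly: one must read both clauses as universally quantified over all pairs $(A,B)$ subject to $B \subseteq A$, so that the contrapositive exchange is applied uniformly rather than to a single instance.

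Next I would handle condition $(c)$ in the same spirit. For arbitrary subsets $A, B$ of $X$, the Definition's clause $(c)$ reads $A \cap B \in \mathcal{P} \Rightarrow (A \in \mathcal{P} \ \lor \ B \in \mathcal{P})$. Negating both sides and applying De Morgan's law to the disjunction in the conclusion, its contrapositive becomes $(A \notin \mathcal{P} \ \land \ B \notin \mathcal{P}) \Rightarrow A \cap B \notin \mathcal{P}$, which is verbatim the Corollary's clause $(c)$. Hence these two are equivalent as well.

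Combining the three equivalences — trivial for $(a)$, contrapositive for $(b)$, and contrapositive together with De Morgan for $(c)$ — yields that $\mathcal{P}$ satisfies the Definition if and only if it satisfies the three conditions of the Corollary, which is the claim. I do not anticipate a genuine obstacle here: the entire content is a routine rewriting of each defining clause into its contrapositive, and the only place to remain slightly cautious is in preserving the universal quantifiers and the side constraint $B \subseteq A$ when passing to the contrapositive in clause $(b)$.
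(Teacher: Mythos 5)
Your proposal is correct: the paper states this corollary without proof (it is cited from the reference on primal topological spaces), and the intended argument is exactly the one you give — clause $(a)$ is identical, clause $(b)$ is the contrapositive of the definition's $(b)$ under the fixed side hypothesis $B\subseteq A$, and clause $(c)$ is the contrapositive of the definition's $(c)$ after De Morgan. Nothing further is needed.
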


\begin{definition} \cite{aoi}
A topological space $(X,\tau)$ with a primal $\mathcal{P}$ on $X$ is called a primal topological space and denoted by $(X,\tau,\mathcal{P}).$
\end{definition}

\begin{definition} \cite{aoi}
Let $(X,\tau,\mathcal{P})$ be a primal topological space. We consider a map $(\cdot)^{\diamond}: 2^X \rightarrow 2^X$ as $A^{\diamond}(X,\tau,\mathcal{P})=\{x\in X | (\forall U\in O(X,x))(A^c\cup U^c \in \mathcal{P})\}$ for any subset $A$ of $X.$ 
We can also write $A^{\diamond}$ as $A^{\diamond}(X,\tau,\mathcal{P})$ to specify the primal as per our requirements.  
\end{definition}

\section{The $(\cdot)^{\diamond}_R$ operator and its fundamental properties}

In this section, we introduce and study a new operator in primal topological spaces. We obtain not only some fundamental properties of this new operator but also give many counterexamples related to this operator. Now, we have the following definition of the  operator $(\cdot)^{\diamond}_R$.\\

\begin{definition}
Let $(X,\tau,\mathcal{P})$ be a primal topological space. We define a map $(\cdot)^{\diamond}_R: 2^X \rightarrow 2^X$ as $A^{\diamond}_R=\{x\in X : (\forall U\in RO(X,x))(A^c\cup U^c \in \mathcal{P})\}$ for any subset $A$ of $X.$ 
We can also use the notation $A_R^{\diamond}$ as $A^{\diamond}_R(X,\tau,\mathcal{P})$ to indicate the primal and the topology as per our requirements.  
\end{definition}

\begin{corollary} \label{15}
Since the family of all open sets and the family of all regular open sets are the same in regular topological spaces, the sets $A_R^{\diamond}$ and $A^{\diamond}$ coincide in that spaces.
\end{corollary}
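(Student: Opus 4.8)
The plan is to prove the set equality $A^{\diamond} = A^{\diamond}_R$ by establishing the two inclusions separately and isolating the single place where the hypothesis on the space is actually used. One inclusion holds in complete generality: since every regular open set is open, $RO(X,x) \subseteq O(X,x)$ for each point $x$, and therefore the defining condition of $A^{\diamond}$ (quantified over all $U \in O(X,x)$) is formally stronger than that of $A^{\diamond}_R$ (quantified only over $U \in RO(X,x)$). Hence $A^{\diamond} \subseteq A^{\diamond}_R$ in any primal topological space, with no regularity assumption whatsoever.

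For the reverse inclusion $A^{\diamond}_R \subseteq A^{\diamond}$, I would fix $x \in A^{\diamond}_R$ together with an arbitrary $U \in O(X,x)$, and aim to show $A^c \cup U^c \in \mathcal{P}$. The key step is to interpose a regular open neighbourhood of $x$ inside $U$. Using regularity, I would choose an open set $W$ with $x \in W \subseteq cl(W) \subseteq U$ and then set $V := int(cl(W))$. A short standard verification shows that $V$ is regular open, that $x \in W \subseteq V$, and that $V \subseteq int(U) = U$, so $V \in RO(X,x)$ with $V \subseteq U$.

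The conclusion then follows from the order-theoretic structure of the primal. From $V \subseteq U$ we obtain $U^c \subseteq V^c$, hence $A^c \cup U^c \subseteq A^c \cup V^c$. Since $V \in RO(X,x)$ and $x \in A^{\diamond}_R$, the definition of $A^{\diamond}_R$ gives $A^c \cup V^c \in \mathcal{P}$; the downward-closure axiom (b) of a primal then yields $A^c \cup U^c \in \mathcal{P}$. As $U \in O(X,x)$ was arbitrary, $x \in A^{\diamond}$, which completes this inclusion and hence the equality.

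I expect the interposition step to be the only substantive obstacle, namely verifying that every open neighbourhood of $x$ contains a regular open neighbourhood of $x$; this is precisely the assertion that the regular open sets form a base, which is what the hypothesis must supply. I would remark that what the argument genuinely uses is only this base property (equivalently, semiregularity, which regularity guarantees), and that the literal reading of the stated reason — that the families $O(X)$ and $RO(X)$ coincide — is in fact stronger than necessary. The remaining ingredients (the two inclusions and the monotonicity of the map $B \mapsto A^c \cup B^c$ under reverse inclusion) are routine consequences of the primal axioms.
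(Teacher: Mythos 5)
Your proof is correct, but it does not follow the paper's route: the paper offers no argument beyond the assertion, built into the statement itself, that in a regular space the family of open sets coincides with the family of regular open sets. That assertion is false in general --- in $\mathbb{R}$ with the usual topology, which is regular, the open set $\mathbb{R}\setminus\{0\}$ is not regular open since $int(cl(\mathbb{R}\setminus\{0\}))=\mathbb{R}$ --- so the paper's justification does not stand as written. What is true, and what your interposition step isolates, is that in a regular space every open neighbourhood of a point contains a regular open neighbourhood of that point, i.e.\ the regular open sets form a base (semiregularity). Your argument --- the free inclusion $A^{\diamond}\subseteq A^{\diamond}_R$ from $RO(X,x)\subseteq O(X,x)$, followed by the reverse inclusion via $V:=int(cl(W))$ with $x\in W\subseteq cl(W)\subseteq U$, and the heredity axiom of the primal applied to $A^c\cup U^c\subseteq A^c\cup V^c$ --- is a complete and correct proof of the corollary's conclusion, and in fact establishes it under the weaker hypothesis of semiregularity. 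Your closing remark that the literal coincidence of $O(X)$ and $RO(X)$ is stronger than necessary actually understates the situation: that coincidence generally fails even in regular spaces, so your base-property argument is not merely more economical than the paper's stated reason but is what is needed to make the corollary true as stated.
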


\begin{corollary} \label{16}
Let $X$ be a topological space and $A\subseteq X.$ If $X$ is a compact Hausdorff space, then the sets $A_R^{\diamond}$ and $A^{\diamond}$ coincide.
\end{corollary}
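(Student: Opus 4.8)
The plan is to reduce the claim to Corollary \ref{15} by observing that every compact Hausdorff space is regular. First I would record the inclusion that holds in \emph{any} primal topological space, namely $A^{\diamond}\subseteq A^{\diamond}_R$. Since every regular open set is open, we have $RO(X,x)\subseteq O(X,x)$, so the membership condition defining $A^{\diamond}$ quantifies over a larger family of neighborhoods than the one defining $A^{\diamond}_R$; it is therefore the stronger requirement, and any $x\in A^{\diamond}$ automatically lies in $A^{\diamond}_R$. This inclusion needs no hypothesis on $X$.

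For the reverse inclusion $A^{\diamond}_R\subseteq A^{\diamond}$, the key structural input is that a compact Hausdorff space is normal, and hence regular. I would then invoke Corollary \ref{15}, which gives the coincidence of the two operators on regular spaces, to conclude $A^{\diamond}_R=A^{\diamond}$.

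If one prefers a self-contained argument in place of citing Corollary \ref{15}, the mechanism is the following. Fix $x\in A^{\diamond}_R$ and an arbitrary $U\in O(X,x)$; the goal is $A^c\cup U^c\in\mathcal{P}$. Using regularity, choose an open $W$ with $x\in W\subseteq cl(W)\subseteq U$ and set $V:=int(cl(W))$. Then $V\in RO(X,x)$ and $V\subseteq U$, so $U^c\subseteq V^c$ and hence $A^c\cup U^c\subseteq A^c\cup V^c$. Since $x\in A^{\diamond}_R$ yields $A^c\cup V^c\in\mathcal{P}$, the downward-closure axiom $(b)$ of a primal forces $A^c\cup U^c\in\mathcal{P}$, whence $x\in A^{\diamond}$. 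The point worth flagging is that the coincidence does not rest on the (false) assertion that every open set is regular open, but on the fact that the regular open neighborhoods form a base at each point of a regular space; combined with the downward closure of $\mathcal{P}$, this is exactly what upgrades a regular-open witness to an arbitrary open one. The only genuine obstacle is thus the topological implication $\text{compact Hausdorff}\Rightarrow\text{regular}$, which is standard.
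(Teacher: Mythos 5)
Your proof is correct and follows the paper's own route exactly: reduce to Corollary \ref{15} via the standard fact that every compact Hausdorff space is regular. Your optional self-contained argument is a worthwhile addition, since it replaces the imprecise justification given in Corollary \ref{15} (open sets need not be regular open even in regular spaces) with the correct mechanism, namely that regular open sets form a neighborhood base at each point of a regular space and the downward-closure axiom of the primal then upgrades the regular-open witness to an arbitrary open neighborhood.
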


\begin{proof}
 It is obvious since every compact Hausdorf space is regular.    
\end{proof}




\begin{remark}
Let $(X,\tau,\mathcal{P})$ be a primal topological space and $A\subseteq X.$ The inclusions of $A^{\diamond}_R\subseteq A$ or $A\subseteq A^{\diamond}_R$ need not always be true as shown by the following examples.
\end{remark}

\begin{example}
Let $X=\{a,b,c\},$ $\tau=\{\emptyset,\{b\}, \{c\}, \{b,c\},\{a,c\}, X\}$ and $\mathcal{P} = \{\emptyset, \{b\}, \{c\}, \{b,c\}\}.$ For the subset $A=\{b,c\},$ we get $A^{\diamond}_R=\emptyset$ but $A=\{b,c\}\nsubseteq \emptyset=A^{\diamond}_R.$
\end{example}

\begin{example}
Let $X=\{a,b,c\},$ $\tau=\{\emptyset,X\}$ and $\mathcal{P}=\{\emptyset,\{b\}, \{c\},\{b,c\}\}.$ For the subset $A=\{a\},$ we get  $A^{\diamond}_R=X$ but $A^{\diamond}_R=X\nsubseteq \{a\}=A.$
\end{example}

\begin{theorem} \label{1}
Let $(X,\tau,\mathcal{P})$ be a primal topological space and $A,B\subseteq X$. Then, the following statements hold.\\

$a)$ $A^{\diamond}\subseteq A^{\diamond}_R,$
\\

$b)$ If $A\in \delta C(X),$ then $ A^{\diamond}_R\subseteq A,$\\

$c)$ $\emptyset^{\diamond}_R= \emptyset$,\\

$d)$ $A^{\diamond}_R\in \delta C(X),$ \\

$e)$ $(A^{\diamond}_R)^{\diamond}_R\subseteq A^{\diamond}_R,$ \\

$f)$ If $A\subseteq B$, then $A^{\diamond}_R\subseteq B^{\diamond}_R$,\\

$g)$ $A^{\diamond}_R\cup B^{\diamond}_R= (A\cup B)^{\diamond}_R,$\\


$h)$ $ (A\cap B)^{\diamond}_R\subseteq A^{\diamond}_R\cap B^{\diamond}_R.$\\
\end{theorem}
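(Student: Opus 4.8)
The plan is to prove the eight statements in roughly the given order, since several later parts lean on earlier ones, and to work throughout from the defining membership condition together with the characterizations of $\delta$-closure in Theorem~1.1 and the primal axioms in their dual form (Corollary~2.1).

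\textbf{Parts (a), (c), (f).} These I would dispatch first as direct consequences of the definition. For (a), since every regular open neighborhood of $x$ is in particular an open neighborhood, the quantifier $(\forall U\in RO(X,x))$ ranges over a subfamily of $(\forall U\in O(X,x))$; hence the condition defining $A^{\diamond}$ (over all open $U$) is logically stronger than the one defining $A^{\diamond}_R$ (over regular open $U$), giving $A^{\diamond}\subseteq A^{\diamond}_R$. For (c), I would show no point lies in $\emptyset^{\diamond}_R$: we need some $U\in RO(X,x)$ with $\emptyset^c\cup U^c=X\cup U^c=X\notin\mathcal{P}$ by axiom (a) of Definition~2.1, and taking $U=X$ works, so the defining condition fails at every $x$. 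For (f), monotonicity follows because $A\subseteq B$ gives $A^c\cup U^c\supseteq$ is false in the wrong direction, so instead I note $A^c\supseteq B^c$, hence $B^c\cup U^c\subseteq A^c\cup U^c$, and then apply the downward-closure axiom (b) of Definition~2.1: if $B^c\cup U^c\in\mathcal{P}$ then $A^c\cup U^c\in\mathcal{P}$, so $x\in A^{\diamond}_R\Rightarrow x\in B^{\diamond}_R$.

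\textbf{Parts (g), (h).} Part (h) is immediate from (f) applied to $A\cap B\subseteq A$ and $A\cap B\subseteq B$. For the equality in (g), the inclusion $A^{\diamond}_R\cup B^{\diamond}_R\subseteq(A\cup B)^{\diamond}_R$ is again (f). The reverse inclusion is where the third primal axiom enters and is the first genuinely delicate step: I would argue the contrapositive. If $x\notin A^{\diamond}_R$ and $x\notin B^{\diamond}_R$, choose $U,V\in RO(X,x)$ witnessing failure, i.e.\ $A^c\cup U^c\notin\mathcal{P}$ and $B^c\cup V^c\notin\mathcal{P}$. Set $W=U\cap V$; since $RO(X)$ is closed under finite intersection, $W\in RO(X,x)$, and $A^c\cup W^c\supseteq A^c\cup U^c$ forces (downward closure, dually) the issue—so I must instead use that $A^c\cup W^c\notin\mathcal{P}$ and $B^c\cup W^c\notin\mathcal{P}$ via the dual axiom (b), then apply the dual axiom (c) of Corollary~2.1 to conclude $(A^c\cup W^c)\cap(B^c\cup W^c)=(A\cup B)^c\cup W^c\notin\mathcal{P}$, so $x\notin(A\cup B)^{\diamond}_R$.

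\textbf{Parts (d), (b), (e).} I expect (d), that $A^{\diamond}_R$ is always $\delta$-closed, to be the main obstacle, and I would prove it before (b) and (e) since those depend on it. The strategy is to show $(A^{\diamond}_R)^c$ is $\delta$-open, i.e.\ $\delta\text{-}int((A^{\diamond}_R)^c)=(A^{\diamond}_R)^c$, using the characterization $\delta\text{-}int(S)=\{x\mid(\exists U\in RO(X,x))(U\subseteq S)\}$ from Theorem~1.1(a). So for $x\notin A^{\diamond}_R$ I would produce a regular open $U\ni x$ lying entirely outside $A^{\diamond}_R$; the witness is the same $U\in RO(X,x)$ with $A^c\cup U^c\notin\mathcal{P}$, and the key sublemma is that every $y\in U$ also satisfies $y\notin A^{\diamond}_R$ because $U$ serves as a common regular open neighborhood (here I rely on $U$ being regular open so it is itself a legitimate member of $RO(X,y)$ for each $y\in U$). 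Once (d) is in hand, (b) follows because for $A\in\delta C(X)$ and $x\notin A=\delta\text{-}cl(A)$ there is $U\in RO(X,x)$ with $U\cap A=\emptyset$ by Theorem~1.1(b), whence $A^c\cup U^c=X\notin\mathcal{P}$, so $x\notin A^{\diamond}_R$; and (e) is then immediate by applying (b) to the $\delta$-closed set $A^{\diamond}_R$ from (d).
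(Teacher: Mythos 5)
Your proposal is correct and follows essentially the same route as the paper on every part: (a), (c), (h) directly from the definitions, (f) via heredity of the primal, (g) by intersecting the two regular open witnesses and invoking the dual axioms of Corollary 2.1, (d) via the key observation that a regular open set is a regular open neighborhood of each of its points (the paper phrases this as $\delta\text{-}cl(A^{\diamond}_R)\subseteq A^{\diamond}_R$ rather than as $\delta$-openness of the complement, but it is the same argument), (b) from the $\delta$-closure/regular-open characterization, and (e) from (b) and (d). One slip to fix in (f): heredity applied to the inclusion $B^c\cup U^c\subseteq A^c\cup U^c$ yields ``if $A^c\cup U^c\in\mathcal{P}$ then $B^c\cup U^c\in\mathcal{P}$,'' not the reversed implication you wrote; the corrected direction is exactly what $x\in A^{\diamond}_R\Rightarrow x\in B^{\diamond}_R$ requires, so the argument goes through once the hypothesis and conclusion are swapped back.
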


\begin{proof}
$a)$ It is obvious since every regular open set in topological spaces is open.
\\

$b)$ Let $A\in \delta C(X)$ and $x\notin A.$ Our aim is to show that $x\notin A_R^{\diamond}.$
\\
$\left.\begin{array}{r}
A\in \delta C(X)\Rightarrow A^c\in\delta O(X)\Rightarrow (\exists\mathcal{A}\subseteq RO(X))\left(A^c=\bigcup \mathcal{A}\right) \\x\notin A\Rightarrow x\in A^c  
\end{array}\right\}\Rightarrow (\exists B\in RO(X))(x\in B\subseteq A^c)$
\\
$\begin{array}{l}\Rightarrow (B\in RO(X,x))(B^c\cup A^c\supseteq (A^c)^c\cup A^c=A\cup A^c=X\notin\mathcal{P})\end{array}$
\\
$\begin{array}{l}\Rightarrow (B\in RO(X,x))(B^c\cup A^c\notin\mathcal{P})\end{array}$
\\
$\begin{array}{l}\Rightarrow x\notin A_R^{\diamond}.\end{array}$
\\

$c)$ $\emptyset_R^{\diamond}=\{x\in X:(\forall U\in RO(X,x))(U^c\cup \emptyset^c=U^c\cup X=X\in\mathcal{P})\}=\emptyset.$
\\


$d)$ Our aim is to show that $A^{\diamond}_R=\delta\text{-}cl(A^{\diamond}_R).$ We have always $A^{\diamond}_R\subseteq \delta\text{-}cl(A^{\diamond}_R)\ldots (1)$
\\

Conversely, now let $x\in \delta\text{-}cl(A^{\diamond}_R)$ and $U\in RO(X,x).$
\\
$
\left.\begin{array}{r}
U\in RO(X,x)\\ x\in \delta\text{-}cl(A^{\diamond}_R)
\end{array}\right\}\Rightarrow U\cap A_R^{\diamond}\neq \emptyset\Rightarrow (\exists y\in X)(y\in U\cap A_R^{\diamond})\Rightarrow (\exists y\in X)(y\in U)(y\in A_R^{\diamond})
$
\\
$
\begin{array}{l}
\Rightarrow (U\in RO(X,y))(y\in A_R^{\diamond})
\end{array}
$
\\
$
\begin{array}{l}
\Rightarrow U^c\cup A^c\in\mathcal{P}
\end{array}
$

Then, we have $x\in A_R^{\diamond}.$ Thus,  $\delta\text{-}cl(A^{\diamond}_R)\subseteq A^{\diamond}_R\ldots (2)$
\\

$
\begin{array}{l}
(1),(2)\Rightarrow \delta\text{-}cl(A^{\diamond}_R)= A^{\diamond}_R\Rightarrow A^{\diamond}_R\in \delta C(X).
\end{array}
$
\\

$e)$ It is obvious from $(b)$ and $(d).$\\

$f)$ Let $A\subseteq B$ and $x\in A^{\diamond}_R.$ Our aim is to show that $x\in B^{\diamond}_R.$
\\
$
\left.\begin{array}{r}
x\in A^{\diamond}_R\Rightarrow (\forall U\in RO(X,x))(U^c\cup A^c\in\mathcal{P})
\\
A\subseteq B\Rightarrow B^c\subseteq A^c
\end{array}\right\}\Rightarrow 
$
\\
$\left.\begin{array}{rr}
\Rightarrow (\forall U\in  RO(X,x))(U^c\cup B^c\subseteq U^c\cup A^c\in\mathcal{P})  \\
\mathcal{P} \text{ is a primal on }  X
\end{array}\right\}\Rightarrow  (\forall U\in  RO(X,x))(U^c\cup B^c\in\mathcal{P})\Rightarrow x\in B^{\diamond}_R.$
\\

$g)$ Let $A,B\subseteq X.$
\\
$\left.\begin{array}{r}
A,B\subseteq X\Rightarrow A\subseteq A\cup B\Rightarrow A_R^{\diamond}\subseteq (A\cup B)_R^{\diamond} \\
A,B\subseteq X\Rightarrow B\subseteq A\cup B\Rightarrow B^{\diamond}_R\subseteq (A\cup B)_R^{\diamond}
\end{array}\right\}\Rightarrow A^{\diamond}_R\cup B^{\diamond}_R\subseteq (A\cup B)_R^{\diamond}\ldots (1)$
\\

Conversely, let $x\notin A^{\diamond}_R\cup B^{\diamond}_R.$
\\
$\left.\begin{array}{r}
x\notin A^{\diamond}_R\cup B^{\diamond}_R\Rightarrow (x\notin A^{\diamond}_R)(x\notin B^{\diamond}_R)\Rightarrow (\exists U,V\in RO(X,x))(U^c\cup A^c\notin\mathcal{P})(V^c\cup B^c\notin\mathcal{P}) \\
W:=U\cap V
\end{array}\right\}\Rightarrow $
\\
$
\begin{array}{l}
\Rightarrow (W\in  RO(X,x))(W^c\cup A^c\notin\mathcal{P})(W^c\cup B^c\notin\mathcal{P})
\end{array}
$
\\
$
\begin{array}{l}
\Rightarrow (W\in RO(X,x))(W^c\cup (A\cup B)^c=(W^c\cup A^c)\cap (W^c\cup B^c)\notin\mathcal{P})
\end{array}
$
\\
$
\begin{array}{l}
\Rightarrow x\notin (A\cup B)_R^{\diamond}
\end{array}
$

Then we have $(A\cup B)_R^{\diamond}\subseteq A_R^{\diamond}\cup B_R^{\diamond}\ldots (2)$
\\

$
\begin{array}{l}
(1),(2)\Rightarrow (A\cup B)_R^{\diamond}=A_R^{\diamond}\cup B_R^{\diamond}.
\end{array}
$
\\

$h)$ It is clear from $(f).$
\end{proof}

\begin{remark}
The converse of Theorem \ref{1}(h) need not always be true as shown by the following example.
\end{remark}


\begin{example}
Let $X=\{a,b,c\},$  $\tau=\{\emptyset,\{a\},\{c\},\{a,c\},X\}$ and $\mathcal{P} = \{\emptyset, \{a\}, \{b\},\{c\},\{a,b\},\{a,c\}\}.$ Now, if $A=\{b\}$ and $B=\{c\},$ then we have $A_R^{\diamond}\cap B_R^{\diamond}=\{b\}\cap \{b,c\}=\{b\} \neq \emptyset =\emptyset_R^{\diamond}=(A\cap B)_R^{\diamond}.$
\end{example}


\begin{theorem} \label{mt}
Let $(X,\tau,\mathcal{P})$ be a primal topological space and $A,B\subseteq X.$ If $A\in \delta O(X),$ then $A\cap B^{\diamond}_R\subseteq (A\cap B)^{\diamond}_R.$
\end{theorem}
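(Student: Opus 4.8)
The plan is to take an arbitrary $x \in A \cap B^{\diamond}_R$ and verify the membership criterion for $(A\cap B)^{\diamond}_R$ directly, namely that $(A\cap B)^c \cup U^c \in \mathcal{P}$ for every $U \in RO(X,x)$. The obvious first move, using $x \in B^{\diamond}_R$ to get $B^c \cup U^c \in \mathcal{P}$, does not suffice on its own: since $(A\cap B)^c \cup U^c = A^c \cup B^c \cup U^c$ is a \emph{superset} of $B^c \cup U^c$, the downward-closure axiom Definition~2.1(b) points the wrong way. The entire role of the hypothesis $A \in \delta O(X)$ is to repair this mismatch by allowing me to shrink the test neighborhood.

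First I would exploit the $\delta$-openness of $A$. Since $x \in A = \delta\text{-}int(A)$, the characterization in Theorem~2.1(a) furnishes a regular open set $V \in RO(X,x)$ with $V \subseteq A$, equivalently $A^c \subseteq V^c$. Then, fixing an arbitrary $U \in RO(X,x)$, I set $W := U \cap V$; because a finite intersection of regular open sets is again regular open --- exactly the fact already invoked in the proof of Theorem~\ref{1}(g) --- we have $W \in RO(X,x)$.

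The heart of the argument is a single containment. From $x \in B^{\diamond}_R$ and $W \in RO(X,x)$ I obtain $B^c \cup W^c \in \mathcal{P}$, and since $W^c = U^c \cup V^c$ together with $A^c \subseteq V^c$ this yields
\[
(A\cap B)^c \cup U^c = A^c \cup B^c \cup U^c \subseteq V^c \cup B^c \cup U^c = B^c \cup W^c \in \mathcal{P}.
\]
Now the downward-closure axiom Definition~2.1(b) applies in the correct direction and gives $(A\cap B)^c \cup U^c \in \mathcal{P}$. As $U \in RO(X,x)$ was arbitrary, this shows $x \in (A\cap B)^{\diamond}_R$, which is the desired inclusion.

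I expect the only subtle step to be the very first one: recognizing that one should replace $U$ by $W = U \cap V$ with $V \subseteq A$, so that the enlarged set $(A\cap B)^c \cup U^c$ is trapped beneath a member of $\mathcal{P}$. Everything afterwards is a mechanical application of the primal axioms. Structurally this is the regular-open, primal analogue of the classical ideal-topology identity $A \cap B^{*} \subseteq (A\cap B)^{*}$ valid for open $A$, with regular open neighborhoods playing the role of open ones.
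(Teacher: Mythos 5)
Your proof is correct and follows essentially the same route as the paper's: both arguments use $x\in A=\delta\text{-}int(A)$ to produce a regular open $V\in RO(X,x)$ with $V\subseteq A$, shrink the arbitrary test neighborhood $U$ to $W=U\cap V\in RO(X,x)$, and then trap $(A\cap B)^c\cup U^c$ underneath $W^c\cup B^c\in\mathcal{P}$ before invoking the heredity axiom of the primal. Your write-up is, if anything, slightly more careful than the paper's in making explicit that the regular open set $V$ actually contains $x$.
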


\begin{proof}
Let $A\in \delta O(X)$ and $x\in A\cap B^{\diamond}_R.$
\\
$\left.\begin{array}{r}
x\in A\cap B^{\diamond}_R\Rightarrow (x\in A)(x\in B^{\diamond}_R)\Rightarrow (x\in A)(\forall U\in RO(X,x))(U^c\cup B^c\in\mathcal{P})  \\
 A\in \delta O(X)\Rightarrow (\exists\mathcal{A}\subseteq RO(X))\left(A=\bigcup\mathcal{A}\right)\Rightarrow (\exists V\in RO(X))(V\subseteq A)
\end{array}\right\}\Rightarrow $
\\
$
\begin{array}{l}
\Rightarrow (\forall U\in RO(X,x))(U\cap V\in RO(X,x))(U^c\cup (A\cap B)^c=(U\cap A)^c\cup B^c\subseteq (U\cap V)^c\cup B^c\in\mathcal{P})
\end{array}
$
\\
$
\begin{array}{l}
\Rightarrow (\forall U\in RO(X,x))(U^c\cup (A\cap B)^c\in\mathcal{P})
\end{array}
$
\\
$
\begin{array}{l}
\Rightarrow x\in (A\cap B)_R^{\diamond}.
\end{array}
$
\end{proof}

\begin{theorem}
Let $(X,\tau,\mathcal{P})$ be a primal topological space. Then, the following statements are equivalent:
\\

$a)$ $X_R^{\diamond}=X;$
\\

$b)$ $RC(X)\setminus \{X\}\subseteq \mathcal{P};$  
\\

$c)$ $A\subseteq A_R^{\diamond}$ for all regular open subsets $A$ of $X.$ 
\end{theorem}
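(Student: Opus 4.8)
The plan is to establish the cyclic chain of implications $(a)\Rightarrow(b)\Rightarrow(c)\Rightarrow(a)$, which is enough to prove all three statements equivalent. The observation that drives everything is that the defining condition simplifies drastically at $A=X$: since $X^c=\emptyset$, we have $X^c\cup U^c=U^c$, so
$$X_R^{\diamond}=\{x\in X:(\forall U\in RO(X,x))(U^c\in\mathcal{P})\}.$$
Thus membership in $X_R^{\diamond}$ is controlled purely by the regular closed sets $U^c$, and as $U$ ranges over the nonempty regular open sets, $U^c$ ranges over exactly $RC(X)\setminus\{X\}$. I would record first that $X\in RO(X)$, so that $X\in RO(X,x)$ for every $x$, which is what lets the argument reach every point.

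For $(a)\Rightarrow(b)$ I would take an arbitrary $F\in RC(X)\setminus\{X\}$, set $U:=F^c$ (a nonempty regular open set), and pick any $x\in U$. Because $(a)$ gives $x\in X=X_R^{\diamond}$ and $U\in RO(X,x)$, the displayed description of $X_R^{\diamond}$ forces $U^c=F\in\mathcal{P}$, whence $RC(X)\setminus\{X\}\subseteq\mathcal{P}$. For $(b)\Rightarrow(c)$, let $A\in RO(X)$ and $x\in A$, and let $U\in RO(X,x)$ be arbitrary. The key move is to use that $RO(X)$ is closed under finite intersections, so $U\cap A\in RO(X)$; moreover $x\in U\cap A$, so $U\cap A$ is nonempty and hence $U\cap A\neq\emptyset$ forces $(U\cap A)^c\in RC(X)\setminus\{X\}$. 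By De Morgan $(U\cap A)^c=A^c\cup U^c$, so $(b)$ yields $A^c\cup U^c\in\mathcal{P}$; since $U\in RO(X,x)$ was arbitrary, $x\in A_R^{\diamond}$, giving $A\subseteq A_R^{\diamond}$. Finally $(c)\Rightarrow(a)$ is immediate: applying $(c)$ to the regular open set $A=X$ gives $X\subseteq X_R^{\diamond}$, and combined with the trivial inclusion $X_R^{\diamond}\subseteq X$ this yields $X_R^{\diamond}=X$.

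The one genuinely substantive step is $(b)\Rightarrow(c)$, and the hard part there is recognizing that $A^c\cup U^c$ should be read as the complement of the nonempty regular open set $U\cap A$; once $U\cap A$ is seen to lie in $RO(X)$, hypothesis $(b)$ applies directly. Everything else is quantifier bookkeeping together with the identity $X^c=\emptyset$ and the correspondence between nonempty regular open sets and members of $RC(X)\setminus\{X\}$. I would not expect to need the downward-closure or intersection axioms of $\mathcal{P}$ here, since in each implication the relevant set is shown to belong to $\mathcal{P}$ by an exact complement identity rather than by an inclusion argument.
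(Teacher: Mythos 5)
Your proof is correct, but it takes a genuinely different route from the paper's. The paper establishes the two equivalences $(a)\Leftrightarrow(b)$ and $(b)\Leftrightarrow(c)$ separately: its $(b)\Rightarrow(c)$ is obtained by specializing Theorem \ref{mt} (the inclusion $A\cap B_R^{\diamond}\subseteq (A\cap B)_R^{\diamond}$ for $\delta$-open $A$) to $B=X$ and combining it with $(b)\Rightarrow(a)$, while its $(c)\Rightarrow(b)$ applies the hypothesis to $A^c$ and then invokes the heredity axiom of a primal (from $A\subseteq U^c\cup A\in\mathcal{P}$ it concludes $A\in\mathcal{P}$). You instead run the single cycle $(a)\Rightarrow(b)\Rightarrow(c)\Rightarrow(a)$, and your $(b)\Rightarrow(c)$ is a direct element-chase: $U\cap A$ is a nonempty regular open set (closure of $RO(X)$ under finite intersections, a fact the paper itself relies on when proving $(A\cup B)_R^{\diamond}=A_R^{\diamond}\cup B_R^{\diamond}$), so $A^c\cup U^c=(U\cap A)^c$ lies in $RC(X)\setminus\{X\}\subseteq\mathcal{P}$ \emph{exactly}, with no appeal to downward closure. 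What your version buys is self-containment (no dependence on Theorem \ref{mt}) and, as you observe, independence from the primal axioms altogether, so the equivalence in fact holds for an arbitrary collection $\mathcal{P}\subseteq 2^X$; what the paper's version buys is brevity on the page, since it recycles an earlier lemma. Both arguments are complete and correct.
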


\begin{proof}
$(a)\Rightarrow (b):$ Let $X_R^{\diamond}=X.$
$$\begin{array}{rcl} X_R^{\diamond}=X & \Rightarrow & (\forall x\in X)(x\in X_R^{\diamond}) \\ \\ & \Rightarrow & (\forall x\in X)(\forall U\in RO(X,x))(U^c\cup X^c=U^c\in\mathcal{P}) \\ \\ & \Rightarrow & (\forall V\in RC(X)\setminus\{X\})(V\in\mathcal{P}) \\ \\ & 
 \Rightarrow & RC(X)\setminus \{X\}\subseteq \mathcal{P}. \end{array}$$

$(b)\Rightarrow (a):$ Let $x\in X$ and $U\in RO(X,x).$ 
\\
$\left.\begin{array}{r}
U\in RO(X,x)\Rightarrow U^c\in RC(X)\setminus \{X\}\\
\text{Hypothesis}
\end{array}\right\}\Rightarrow U^c\cup X^c=U^c\cup\emptyset=U^c\in\mathcal{P}$
\\

Then, we have $x\in X_R^{\diamond}.$ Thus, $X\subseteq X_R^{\diamond}\subseteq X$ and so $X_R^{\diamond}=X.$
\\

$(b)\Rightarrow (c):$ Let $A\in RO(X).$
\\
$
\left.\begin{array}{r}
A\in RO(X)\overset{\text{Theorem }    \ref{mt}}{\Rightarrow} A\cap X_R^{\diamond}\subseteq (A\cap X)_R^{\diamond}=A_R^{\diamond}
\\
RC(X)\setminus \{X\}\subseteq \mathcal{P}\Rightarrow X_R^{\diamond}=X
\end{array}\right\}\Rightarrow A\subseteq A_R^{\diamond}.
$
\\

$(c)\Rightarrow (b):$ Let $A\in RC(X)\setminus \{X\}.$ 
\\
$\left.\begin{array}{r}
A\in RC(X)\setminus \{X\}\Rightarrow  A^c\in RO(X)\setminus \{\emptyset\}\\
\text{Hypothesis}\end{array}\right\}\Rightarrow A^c\subseteq (A^c)_R^{\diamond}\Rightarrow (\forall x\in A^c)(x\in (A^c)_R^{\diamond})$
\\
$\left.\begin{array}{rr}\Rightarrow (\forall x\in A^c)(\forall U\in RO(X,x))(U^c\cup (A^c)^c=U^c\cup A\in\mathcal{P}) \\ A\subseteq U^c\cup A\end{array}\right\}\Rightarrow A\in\mathcal{P}$
\\

Then, we have $RC(X)\setminus \{X\}\subseteq \mathcal{P}.$
\end{proof}

\begin{theorem}\label{5}
Let $\mathcal{P}$ be a primal on topological space $(X,\tau)$ and $A\subseteq X.$ If $A_R^{\diamond}\neq \emptyset,$ then $A^c\in \mathcal{P}.$  
\end{theorem}

\begin{proof}
Let $A_R^{\diamond}\neq \emptyset.$
\\
$
\left.\begin{array}{r}
A_R^{\diamond}\neq \emptyset\Rightarrow (\exists x\in X)(x\in A_R^{\diamond})\Rightarrow (\forall U\in RO(X,x))(A^c\subseteq U^c\cup A^c\in\mathcal{P}) \\ \mathcal{P} \text{ is a primal on } X
\end{array}\right\}\Rightarrow A^c\in\mathcal{P}.
$
\end{proof}



\begin{corollary}\label{20}
Let $\mathcal{P}$ be a primal on topological space $(X,\tau)$ and $A\subseteq X.$ If $A^c\notin \mathcal{P},$ then $A_R^{\diamond}=\emptyset.$
\end{corollary}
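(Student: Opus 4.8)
The plan is to observe that this corollary is precisely the contrapositive of Theorem \ref{5}, so the proof reduces to a one-line logical manipulation together with a citation of the preceding result.

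First I would recall that Theorem \ref{5} states: if $A_R^{\diamond}\neq\emptyset$, then $A^c\in\mathcal{P}$. Taking the contrapositive of this implication---that is, negating both the conclusion and the hypothesis---yields exactly: if $A^c\notin\mathcal{P}$, then $A_R^{\diamond}=\emptyset$, which is the desired statement. Thus I would simply assume $A^c\notin\mathcal{P}$, suppose toward a contradiction that $A_R^{\diamond}\neq\emptyset$, apply Theorem \ref{5} to conclude $A^c\in\mathcal{P}$, and note the immediate contradiction with the hypothesis. Hence $A_R^{\diamond}=\emptyset$.

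Alternatively, a self-contained direct argument is available by unwinding the definition of $(\cdot)_R^{\diamond}$. For an arbitrary point $x\in X$ and any $U\in RO(X,x)$, since $A^c\subseteq U^c\cup A^c$ and $A^c\notin\mathcal{P}$, the upward-closure property of the complement of a primal (namely, if $B\notin\mathcal{P}$ and $B\subseteq C$, then $C\notin\mathcal{P}$) forces $U^c\cup A^c\notin\mathcal{P}$. Consequently $x\notin A_R^{\diamond}$, and as $x$ was arbitrary we obtain $A_R^{\diamond}=\emptyset$.

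Since this is a pure contrapositive of an already-established theorem, there is no genuine obstacle to overcome. The only points requiring any care are ensuring that the negations in the contrapositive are taken correctly and, in the direct version, that one invokes the correct closure condition of the primal (monotonicity of non-membership upward) rather than its dual.
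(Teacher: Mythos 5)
Your proposal is correct and matches the paper's proof, which likewise simply observes that the corollary is the contrapositive of Theorem \ref{5}. The additional direct argument you sketch (using the upward-closure of non-membership in a primal) is also valid but unnecessary here.
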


\begin{proof}
It is obvious from Theorem \ref{5}.
\end{proof}

\begin{theorem} \label{4}
Let $(X,\tau,\mathcal{P})$ be a primal topological space and $A,B\subseteq X.$ Then, $A_R^{\diamond}\setminus B_R^{\diamond}=(A\setminus B)_R^{\diamond}\setminus B_R^{\diamond}.$ \end{theorem}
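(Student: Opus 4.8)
The plan is to prove the claimed equality by establishing the two inclusions separately, and the only machinery I expect to need is the monotonicity property Theorem \ref{1}(f) together with the additivity (finite-union) property Theorem \ref{1}(g). The whole identity should collapse into a short set-theoretic argument once those two structural facts are in hand; no return to the pointwise definition via $RO(X,x)$ and the primal $\mathcal{P}$ should be necessary.

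For the inclusion $A_R^{\diamond}\setminus B_R^{\diamond}\subseteq (A\setminus B)_R^{\diamond}\setminus B_R^{\diamond}$, I would start from the elementary set decomposition $A\subseteq (A\setminus B)\cup B$. Applying monotonicity (f) and then additivity (g) yields
$$A_R^{\diamond}\subseteq \big((A\setminus B)\cup B\big)_R^{\diamond}=(A\setminus B)_R^{\diamond}\cup B_R^{\diamond}.$$
Subtracting $B_R^{\diamond}$ from both sides and using that removing $B_R^{\diamond}$ from $(A\setminus B)_R^{\diamond}\cup B_R^{\diamond}$ leaves exactly $(A\setminus B)_R^{\diamond}\setminus B_R^{\diamond}$ gives the desired containment.

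For the reverse inclusion $(A\setminus B)_R^{\diamond}\setminus B_R^{\diamond}\subseteq A_R^{\diamond}\setminus B_R^{\diamond}$, I would simply note that $A\setminus B\subseteq A$, so monotonicity (f) gives $(A\setminus B)_R^{\diamond}\subseteq A_R^{\diamond}$; intersecting both sides with the complement of $B_R^{\diamond}$ (i.e. subtracting $B_R^{\diamond}$) preserves the inclusion. Combining the two directions then yields the stated equality. I do not anticipate a genuine obstacle here: the subtlety, such as it is, lies only in choosing the right decomposition $A\subseteq (A\setminus B)\cup B$ so that additivity can be invoked, since the operator is not itself subtractive; once that decomposition is fixed, both inclusions are routine.
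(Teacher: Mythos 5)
Your proof is correct and follows essentially the same route as the paper: decompose $A$, apply monotonicity and additivity of $(\cdot)_R^{\diamond}$, subtract $B_R^{\diamond}$, and handle the reverse inclusion via $A\setminus B\subseteq A$. The only cosmetic difference is that the paper writes the exact decomposition $A=(A\setminus B)\cup(A\cap B)$ and then uses $(A\cap B)_R^{\diamond}\subseteq B_R^{\diamond}$, whereas you pass directly to the inclusion $A\subseteq(A\setminus B)\cup B$; both yield the same bound $A_R^{\diamond}\subseteq(A\setminus B)_R^{\diamond}\cup B_R^{\diamond}$.
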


\begin{proof}
Let $A,B\subseteq X.$
\\

$\begin{array}{rcl}
A,B\subseteq X & \Rightarrow & A=(A\setminus B)\cup (A\cap B) \\ \\ & \Rightarrow & A_R^{\diamond}  =  [(A\setminus B)\cup (A\cap B)]_R^{\diamond} \\ \\
& \Rightarrow & A_R^{\diamond} = (A\setminus B)_R^{\diamond}\cup (A\cap B)_R^{\diamond} \subseteq   (A\setminus B)_R^{\diamond}\cup B_R^{\diamond}\\ \\
&\Rightarrow& A_R^{\diamond}\setminus B_R^{\diamond}\subseteq (A\setminus B)_R^{\diamond}  \\ \\ & \Rightarrow & A_R^{\diamond}\setminus B_R^{\diamond}\subseteq (A\setminus B)_R^{\diamond}\setminus B_R^{\diamond}\ldots (1)
\end{array}
$
\\

$
\begin{array}{rcl}
A,B\subseteq X & \Rightarrow & A\setminus B\subseteq A \\ \\ & \Rightarrow & (A\setminus B)_R^{\diamond}\subseteq A_R^{\diamond} \\ \\
&\Rightarrow & (A\setminus B)_R^{\diamond}\setminus B_R^{\diamond}\subseteq A_R^{\diamond}\setminus B_R^{\diamond}\ldots (2)
\end{array}$

$\begin{array}{l}
(1),(2)\Rightarrow A_R^{\diamond}\setminus B_R^{\diamond}=(A\setminus B)_R^{\diamond}\setminus B_R^{\diamond}.
\end{array}$
\end{proof}

\begin{theorem}
Let $(X,\tau,\mathcal{P})$ be a primal topological space and $A,B\subseteq X.$ If $B^c\notin\mathcal{P},$ then $(A\cup B)_R^{\diamond}=A_R^{\diamond}=(A\setminus B)_R^{\diamond}.$
\end{theorem}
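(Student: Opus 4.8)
The plan is to reduce the entire statement to a single observation: the hypothesis $B^c\notin\mathcal{P}$ forces $B_R^{\diamond}$ to be empty. This is exactly the content of Corollary~\ref{20} (the contrapositive of Theorem~\ref{5}), so I would invoke it at the outset to record $B_R^{\diamond}=\emptyset$. Once this fact is in hand, both required equalities collapse into routine applications of earlier results, and the proof becomes a short chain of substitutions.

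For the first equality $(A\cup B)_R^{\diamond}=A_R^{\diamond}$, I would apply the additivity property of Theorem~\ref{1}(g), which gives $(A\cup B)_R^{\diamond}=A_R^{\diamond}\cup B_R^{\diamond}$. Substituting $B_R^{\diamond}=\emptyset$ yields $A_R^{\diamond}\cup\emptyset=A_R^{\diamond}$ immediately, establishing the left-hand equality.

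For the second equality $A_R^{\diamond}=(A\setminus B)_R^{\diamond}$, I would start from Theorem~\ref{4}, namely $A_R^{\diamond}\setminus B_R^{\diamond}=(A\setminus B)_R^{\diamond}\setminus B_R^{\diamond}$, and again substitute $B_R^{\diamond}=\emptyset$. Since the set difference with the empty set is the identity, the left side simplifies to $A_R^{\diamond}$ and the right side to $(A\setminus B)_R^{\diamond}$, which gives the claim. Chaining the two equalities then produces $(A\cup B)_R^{\diamond}=A_R^{\diamond}=(A\setminus B)_R^{\diamond}$, as desired.

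Because every step after the initial observation is a mechanical substitution of $B_R^{\diamond}=\emptyset$ into a previously proved identity, there is no genuine obstacle here. The only point deserving any care is recognizing that the correct technical input for turning the hypothesis on $B^c$ into information about $B_R^{\diamond}$ is precisely Corollary~\ref{20}; everything else follows from the additivity of Theorem~\ref{1}(g) and the difference formula of Theorem~\ref{4}.
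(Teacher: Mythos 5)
Your proposal is correct and follows essentially the same route as the paper: both derive $B_R^{\diamond}=\emptyset$ from Corollary \ref{20}, then obtain $(A\cup B)_R^{\diamond}=A_R^{\diamond}$ from the additivity in Theorem \ref{1}(g) and $A_R^{\diamond}=(A\setminus B)_R^{\diamond}$ from the difference identity in Theorem \ref{4}. No gaps; this matches the paper's argument step for step.
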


\begin{proof}
Let $A,B\subseteq X.$
\\
$
\left.\begin{array}{r}
A,B\subseteq X\overset{\text{Theorem } \ref{4}}{\Rightarrow} A_R^{\diamond}\setminus B_R^{\diamond}=(A\setminus B)_R^{\diamond}\setminus B_R^{\diamond}
\\
B^c\notin\mathcal{P}\overset{\text{Corollary }\ref{20}}{\Rightarrow} B_R^{\diamond}=\emptyset
\end{array}\right\}\Rightarrow A_R^{\diamond}=(A\setminus B)_R^{\diamond}\ldots (1)
$
\\
$
\left.\begin{array}{r}
A,B\subseteq X\overset{\text{Theorem } \ref{1}}{\Rightarrow} (A\cup B)_R^{\diamond}=A_R^{\diamond}\cup B_R^{\diamond}
\\
B^c\notin\mathcal{P}\overset{\text{Corollary }\ref{20}}{\Rightarrow}  B_R^{\diamond}=\emptyset
\end{array}\right\}\Rightarrow (A\cup B)_R^{\diamond}=A_R^{\diamond}\ldots (2)
$
\\

$
\begin{array}{l}
(1),(2)\Rightarrow (A\cup B)_R^{\diamond}=A_R^{\diamond}=(A\setminus B)_R^{\diamond}.
\end{array}
$
\end{proof}

\section{The $cl^{\diamond}_R$ operator and $\tau^{\diamond}_R$ topology}

In this section, we define and investigate a new operator by means of the $(\cdot)^{\diamond}_R$ operator. The operator defined in this section comes across as a Kuratowski closure operator while the operator we defined in the previous section is not. In this way, we obtain a new topology, called $\tau^{\diamond}_R,$ which is finer than $\tau_{\delta}.$ Also, we get a base for this new topology.

\begin{definition}\label{1}
Let $(X,\tau,\mathcal{P})$ be a primal topological space. We define an operator $cl^{\diamond}_R: 2^X\rightarrow 2^X$ as $cl^{\diamond}_R(A)=A\cup A^{\diamond}_R$, where $A$ is any subset of $X$. 
\end{definition}

\begin{corollary} 
Let $X$ be a topological space and $A\subseteq X.$ If $X$ is a regular space, then the sets $cl_R^{\diamond}(A)$ and $cl^{\diamond}(A)$ coincide. 
\end{corollary}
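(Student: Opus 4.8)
The plan is to reduce the claim that $cl_R^{\diamond}(A)=cl^{\diamond}(A)$ in a regular space to the already-established fact that the two diamond operators agree there. By Corollary \ref{15}, in a regular topological space the families $O(X,x)$ and $RO(X,x)$ coincide, and consequently $A_R^{\diamond}=A^{\diamond}$ for every subset $A\subseteq X$. Since the operator $cl^{\diamond}$ associated with the original (non-regular) operator is defined exactly as in Definition \ref{1} but using $A^{\diamond}$ in place of $A_R^{\diamond}$ — that is, $cl^{\diamond}(A)=A\cup A^{\diamond}$ — the result should follow by direct substitution.

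First I would invoke Corollary \ref{15} to obtain the pointwise equality $A_R^{\diamond}=A^{\diamond}$ valid for the fixed regular space $X$ and the arbitrary subset $A$. Then I would unwind both closure operators from their definitions:
$$cl_R^{\diamond}(A)=A\cup A_R^{\diamond}=A\cup A^{\diamond}=cl^{\diamond}(A).$$
The middle equality is precisely the content of Corollary \ref{15}, and the two outer equalities are just the defining formulas for $cl_R^{\diamond}$ and $cl^{\diamond}$. No further structural argument is needed, since taking the union with $A$ is an identical operation on both sides.

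I do not expect any genuine obstacle here; the statement is a straightforward corollary of the coincidence of the underlying operators, and the regularity hypothesis is used only through Corollary \ref{15}. The one point worth stating explicitly is that the equality $A_R^{\diamond}=A^{\diamond}$ holds for \emph{every} subset $A$, so that the union $A\cup(\cdot)$ built on top of each operator transfers the coincidence from the operators to their associated closures without any additional assumptions. If anything requires care, it is merely making sure the reader recalls the definition $cl^{\diamond}(A)=A\cup A^{\diamond}$ of the grill/primal closure from the earlier work \cite{aoi}, which mirrors Definition \ref{1}.
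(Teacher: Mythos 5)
Your argument is correct and is exactly the paper's proof: the paper also derives the corollary directly from Corollary \ref{15} together with the definition $cl_R^{\diamond}(A)=A\cup A^{\diamond}_R$, merely stating it as obvious. You have simply written out the substitution step explicitly.
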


\begin{proof}
It is obvious from Corollary \ref{15} and Definition \ref{1}. 
\end{proof}

\begin{corollary} \label{17}
Let $X$ be a topological space and $A\subseteq X.$ If $X$ is a compact Hausdorff space, then the sets $cl_R^{\diamond}(A)$ and $cl^{\diamond}(A)$ coincide.    
\end{corollary}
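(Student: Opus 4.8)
The plan is to reduce the claim to the coincidence of the two underlying diamond operators, which has already been settled. Recall that $cl_R^{\diamond}$ is built from $(\cdot)_R^{\diamond}$ through the defining formula $cl_R^{\diamond}(A)=A\cup A_R^{\diamond}$, and that the classical operator is given analogously by $cl^{\diamond}(A)=A\cup A^{\diamond}$. Consequently the two closure-type operators differ only through the operators $(\cdot)_R^{\diamond}$ and $(\cdot)^{\diamond}$ nested inside them, so it suffices to prove $A_R^{\diamond}=A^{\diamond}$ for every $A\subseteq X$ under the standing hypothesis.

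The key step is to invoke Corollary \ref{16}: because $X$ is compact Hausdorff, that result already yields $A_R^{\diamond}=A^{\diamond}$ for every subset $A$ of $X$. Substituting this equality into the two defining formulas gives
\[
cl_R^{\diamond}(A)=A\cup A_R^{\diamond}=A\cup A^{\diamond}=cl^{\diamond}(A),
\]
which is exactly the assertion. Alternatively, since every compact Hausdorff space is regular, one may bypass Corollary \ref{16} entirely and simply quote the immediately preceding corollary (the regular-space version of the present statement), which already establishes $cl_R^{\diamond}(A)=cl^{\diamond}(A)$ whenever $X$ is regular; either route closes the argument in a single line.

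I expect no genuine obstacle here: the entire mathematical content has been absorbed into Corollary \ref{16} (equivalently, into the regularity of compact Hausdorff spaces together with Corollary \ref{15}), and what remains is only the substitution displayed above. The single point requiring care is purely bookkeeping, namely confirming that $cl^{\diamond}$ is defined by the same union pattern as $cl_R^{\diamond}$, so that the equality of the inner operators transfers verbatim to the outer ones; once that is noted, the proof is immediate.
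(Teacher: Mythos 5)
Your proposal is correct and follows exactly the paper's own route: the paper's proof is simply ``It is obvious from Corollary \ref{16},'' which is the substitution $cl_R^{\diamond}(A)=A\cup A_R^{\diamond}=A\cup A^{\diamond}=cl^{\diamond}(A)$ that you spell out. The alternative path you mention (regularity of compact Hausdorff spaces plus the preceding regular-space corollary) is equally valid and is in fact how the paper handles the analogous statement for the topologies $\tau_R^{\diamond}$ and $\tau^{\diamond}$ later on.
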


\begin{proof}
It is obvious from Corollary \ref{16}.
\end{proof}

\begin{theorem} \label{2}
Let $(X,\tau,\mathcal{P})$ be a primal topological space and $A,B\subseteq X.$ Then, the following statements hold: 
\\

$a)$ $cl^{\diamond}_R(\emptyset)= \emptyset$,\\

$b)$ $cl^{\diamond}_R(X)= X$,\\

$c)$ $A\subseteq cl^{\diamond}(A)\subseteq cl^{\diamond}_R(A)$,\\

$d)$ If $A\subseteq B$, then $cl^{\diamond}_R(A)\subseteq cl^{\diamond}_R(B)$,\\

$e)$ $cl^{\diamond}_R(A\cup B)=cl^{\diamond}_R(A)\cup cl^{\diamond}_R(B),$\\

$f)$ $cl^{\diamond}_R(cl^{\diamond}_R(A))=cl^{\diamond}_R(A).$

\end{theorem}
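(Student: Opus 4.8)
The plan is to verify that $cl^{\diamond}_R$ satisfies the Kuratowski closure axioms, treating the six parts in the order given and leaning throughout on the already-established properties of the underlying operator $(\cdot)^{\diamond}_R$ from Theorem \ref{1}. The definition $cl^{\diamond}_R(A)=A\cup A^{\diamond}_R$ means every statement about $cl^{\diamond}_R$ should reduce, after unwinding the union, to a statement about $(\cdot)^{\diamond}_R$ that we have already proved. For part $(a)$, I would write $cl^{\diamond}_R(\emptyset)=\emptyset\cup\emptyset^{\diamond}_R$ and invoke Theorem \ref{1}(c) to get $\emptyset^{\diamond}_R=\emptyset$, hence $cl^{\diamond}_R(\emptyset)=\emptyset$. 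Part $(b)$ is immediate: $cl^{\diamond}_R(X)=X\cup X^{\diamond}_R=X$ since $X^{\diamond}_R\subseteq X$ is automatic for any subset. Part $(c)$ is essentially bookkeeping: $A\subseteq A\cup A^{\diamond}_R=cl^{\diamond}_R(A)$ gives the first inclusion, and since $A^{\diamond}\subseteq A^{\diamond}_R$ by Theorem \ref{1}(a), taking the union with $A$ yields $cl^{\diamond}(A)=A\cup A^{\diamond}\subseteq A\cup A^{\diamond}_R=cl^{\diamond}_R(A)$.

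For monotonicity, part $(d)$, I would assume $A\subseteq B$ and combine two facts: $A\subseteq B\subseteq cl^{\diamond}_R(B)$ and, by Theorem \ref{1}(f), $A^{\diamond}_R\subseteq B^{\diamond}_R\subseteq cl^{\diamond}_R(B)$; taking the union of the left-hand sides gives $cl^{\diamond}_R(A)=A\cup A^{\diamond}_R\subseteq cl^{\diamond}_R(B)$. Part $(e)$, the additivity axiom, is where the strength of Theorem \ref{1}(g) becomes essential. I would expand
$$cl^{\diamond}_R(A\cup B)=(A\cup B)\cup(A\cup B)^{\diamond}_R=(A\cup B)\cup(A^{\diamond}_R\cup B^{\diamond}_R)$$
using Theorem \ref{1}(g) for the equality $(A\cup B)^{\diamond}_R=A^{\diamond}_R\cup B^{\diamond}_R$, and then rearrange the four-fold union into $(A\cup A^{\diamond}_R)\cup(B\cup B^{\diamond}_R)=cl^{\diamond}_R(A)\cup cl^{\diamond}_R(B)$. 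Note that part $(d)$ also follows formally from $(e)$, but proving it directly first is harmless and keeps the exposition linear.

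The idempotence axiom, part $(f)$, is the step I expect to require the most care, since this is where Kuratowski closure operators typically fail for naively defined analogues. The goal is $cl^{\diamond}_R(cl^{\diamond}_R(A))=cl^{\diamond}_R(A)$; the inclusion $\supseteq$ is free from part $(c)$ applied to the set $cl^{\diamond}_R(A)$. For the reverse inclusion I would expand
$$cl^{\diamond}_R(cl^{\diamond}_R(A))=cl^{\diamond}_R(A)\cup\bigl(cl^{\diamond}_R(A)\bigr)^{\diamond}_R=\bigl(A\cup A^{\diamond}_R\bigr)\cup\bigl(A\cup A^{\diamond}_R\bigr)^{\diamond}_R,$$
then apply additivity of the operator (Theorem \ref{1}(g)) to get $\bigl(A\cup A^{\diamond}_R\bigr)^{\diamond}_R=A^{\diamond}_R\cup(A^{\diamond}_R)^{\diamond}_R$. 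The crucial input is Theorem \ref{1}(e), namely $(A^{\diamond}_R)^{\diamond}_R\subseteq A^{\diamond}_R$, which collapses the troublesome second-order term back into $A^{\diamond}_R$. After this absorption the whole expression reduces to $A\cup A^{\diamond}_R=cl^{\diamond}_R(A)$, establishing $\subseteq$ and hence equality. Thus the entire theorem is a matter of assembling parts $(a)$, $(c)$, $(e)$, $(f)$ of Theorem \ref{1} in the right combinations, with the nontrivial algebraic content concentrated in the idempotence argument where $(A^{\diamond}_R)^{\diamond}_R\subseteq A^{\diamond}_R$ does the real work.
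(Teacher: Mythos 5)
Your proposal is correct and follows essentially the same route as the paper: each part is reduced to the corresponding property of $(\cdot)^{\diamond}_R$ from Theorem \ref{1}, with additivity $(A\cup B)^{\diamond}_R=A^{\diamond}_R\cup B^{\diamond}_R$ handling $(e)$ and the combination of $(A^{\diamond}_R)^{\diamond}_R\subseteq A^{\diamond}_R$ with additivity handling idempotence in $(f)$. No substantive differences.
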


\begin{proof}
$a)$ Since $\emptyset^{\diamond}_R=\emptyset,$ we have $cl^{\diamond}_R(\emptyset)=\emptyset\cup \emptyset^{\diamond}_R=\emptyset.$  
\\

$b)$ Since $X^{\diamond}_R\subseteq X,$ we have $cl^{\diamond}_R(X)=X\cup X^{\diamond}_R=X.$\\

$c)$ Since $cl^{\diamond}_R(A)=A\cup A^{\diamond}_R,$ we have $A\subseteq cl^{\diamond}_R(A).$ Also, since $A^{\diamond}\subseteq A^{\diamond}_R,$ we have $cl^{\diamond}(A)\subseteq cl^{\diamond}_R(A).$\\

$d)$ Let $A\subseteq B.$
$$A\subseteq B\Rightarrow A^{\diamond}_R\subseteq B^{\diamond}_R\Rightarrow A\cup  A^{\diamond}_R\subseteq B\cup  B^{\diamond}_R\Rightarrow cl_R^{\diamond}(A)\subseteq cl_R^{\diamond}(B).$$

$e)$ Let $A,B\subseteq X.$
$$\begin{array}{rcl}
cl^{\diamond}_R(A\cup B) & = & (A\cup B)\cup (A\cup B)_R^{\diamond} \\ \\ & = & (A\cup B)\cup (A_R^{\diamond}\cup B_R^{\diamond}) \\ \\ & = & (A\cup A_R^{\diamond})\cup (B\cup B_R^{\diamond}) \\ \\ & = & cl^{\diamond}_R(A)\cup cl^{\diamond}_R(B).
\end{array}$$
\\

$f)$ Let $A\subseteq X.$ It is obvious from $(c)$ that $cl^{\diamond}_R(A)\subseteq cl^{\diamond}_R(cl^{\diamond}_R(A))\ldots (1)$
\\

$\left.\begin{array}{r}
cl^{\diamond}_R(cl^{\diamond}_R(A))  =  cl^{\diamond}_R(A)\cup \left(cl^{\diamond}_R(A)\right)^{\diamond}_R =  cl^{\diamond}_R(A)\cup (A\cup A^{\diamond}_R)^{\diamond}_R = cl^{\diamond}_R(A)\cup A^{\diamond}_R\cup (A^{\diamond}_R)^{\diamond}_R \\
A\subseteq X\Rightarrow A^{\diamond}_R\in\delta C(X)\Rightarrow
(A^{\diamond}_R)^{\diamond}_R\subseteq A^{\diamond}_R\end{array}\right\}\Rightarrow $
\\
$
\begin{array}{l}
\Rightarrow cl^{\diamond}_R(cl^{\diamond}_R(A))  \subseteq cl^{\diamond}_R(A)\cup A^{\diamond}_R(A)\cup A^{\diamond}_R(A) =  cl^{\diamond}_R(A)\ldots (2)
\end{array}
$
\\

$
\begin{array}{l}
(1),(2)\Rightarrow cl^{\diamond}_R(cl^{\diamond}_R(A))  = cl^{\diamond}_R(A).
\end{array}
$
\end{proof}

\begin{corollary}
Let $(X,\tau,\mathcal{P})$ be a primal topological space. Then the operator $cl^{\diamond}_R: 2^X\rightarrow 2^X$ defined by $cl^{\diamond}_R(A)=A\cup A^{\diamond}_R$, where $A$ is any subset of $X,$ is a Kuratowski's closure operator.
\end{corollary}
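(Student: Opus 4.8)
The plan is to verify directly that the operator $cl^{\diamond}_R$ satisfies the four Kuratowski closure axioms, each of which has already been established as a separate item in Theorem \ref{2}. Recall that a map $c:2^X\to 2^X$ is a Kuratowski closure operator precisely when it satisfies: (K1) $c(\emptyset)=\emptyset$; (K2) $A\subseteq c(A)$ for every $A\subseteq X$; (K3) $c(A\cup B)=c(A)\cup c(B)$ for all $A,B\subseteq X$; and (K4) $c(c(A))=c(A)$ for every $A\subseteq X$. Since the corollary is an immediate consequence of Theorem \ref{2}, the proof amounts to matching each axiom to the corresponding part of that theorem, so there is essentially no new mathematical content to generate.

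The matching I would carry out is as follows. For (K1), normalization of the empty set, I would cite Theorem \ref{2}(a), which gives $cl^{\diamond}_R(\emptyset)=\emptyset$. For (K2), extensivity, I would cite Theorem \ref{2}(c), whose first inclusion $A\subseteq cl^{\diamond}(A)\subseteq cl^{\diamond}_R(A)$ yields $A\subseteq cl^{\diamond}_R(A)$. For (K3), preservation of finite unions, I would cite Theorem \ref{2}(e), which is exactly the identity $cl^{\diamond}_R(A\cup B)=cl^{\diamond}_R(A)\cup cl^{\diamond}_R(B)$. Finally, for (K4), idempotence, I would cite Theorem \ref{2}(f), which states $cl^{\diamond}_R(cl^{\diamond}_R(A))=cl^{\diamond}_R(A)$.

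One small point worth flagging is that the standard Kuratowski axioms require only the three substantive conditions (K1), (K3), (K4) together with extensivity (K2); monotonicity, which is Theorem \ref{2}(d), and the normalization $cl^{\diamond}_R(X)=X$ from Theorem \ref{2}(b), are then derivable or redundant, so I would not need to invoke those items explicitly. I would mention in passing that monotonicity follows from (K3) since $A\subseteq B$ gives $A\cup B=B$ and hence $cl^{\diamond}_R(A)\subseteq cl^{\diamond}_R(A)\cup cl^{\diamond}_R(B)=cl^{\diamond}_R(B)$, just to confirm internal consistency, but this is not strictly required for the claim.

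Honestly, there is no genuine obstacle here: the entire difficulty of establishing that $cl^{\diamond}_R$ is a closure operator has already been absorbed into Theorem \ref{2}, and in particular into the verification of part (d) of Theorem \ref{1} (that $A^{\diamond}_R$ is $\delta$-closed), which was the engine behind the idempotence computation in Theorem \ref{2}(f). The only thing to be careful about in writing this final proof is to state the axioms in the correct, conventional form and to route each one to the right labeled item, so that the reader sees the corollary is simply a repackaging of the preceding theorem. I would therefore keep the proof to a single sentence or a short bulleted correspondence, concluding that all Kuratowski axioms hold and hence $cl^{\diamond}_R$ is a Kuratowski closure operator.
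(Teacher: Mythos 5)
Your proposal is correct and matches the paper's (implicit) argument exactly: the corollary is stated as an immediate consequence of the preceding theorem, with parts (a), (c), (e), and (f) supplying the four Kuratowski axioms just as you describe. Your remark that monotonicity and $cl^{\diamond}_R(X)=X$ are redundant is accurate and does not affect the argument.
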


\begin{definition}
Let $(X,\tau,\mathcal{P})$ be a primal topological space. Then, the family $\tau^{\diamond}_R=\{A\subseteq X: cl^{\diamond}_R(A^c)=A^c\}$ is a topology on $X$ induced by topology $\tau$ and primal $\mathcal{P}.$ We can also use the notation  $\tau^{\diamond}_{R(\mathcal{P})}$ instead of $\tau^{\diamond}_R$ to indicate the primal as per our requirements.
\end{definition}

\begin{corollary}
Let $X$ be a topological space and $A\subseteq X.$ If $X$ is a regular space, then the topologies $\tau_R^{\diamond}$ and $\tau^{\diamond}$ coincide.
\end{corollary}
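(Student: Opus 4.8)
The plan is to reduce the equality of the two topologies to the equality of their generating closure operators, which has already been recorded in the corollary immediately preceding this one. Recall that $\tau_R^{\diamond}$ is defined as the family $\{A\subseteq X : cl_R^{\diamond}(A^c)=A^c\}$, and that $\tau^{\diamond}$ is the analogous family $\{A\subseteq X : cl^{\diamond}(A^c)=A^c\}$ consisting of the complements of the $cl^{\diamond}$-closed sets. Since each of $cl_R^{\diamond}$ and $cl^{\diamond}$ is a Kuratowski closure operator, each one determines its induced topology uniquely, namely as the collection of those sets whose complements are fixed points of the operator. Consequently, it suffices to verify that the two operators agree on every subset of $X$ whenever $X$ is regular.

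First I would invoke the preceding corollary, which asserts that $cl_R^{\diamond}(A)=cl^{\diamond}(A)$ for every $A\subseteq X$ in a regular space; that result in turn rests on Corollary \ref{15}, where the coincidence $A_R^{\diamond}=A^{\diamond}$ is obtained from the fact that $RO(X)=\tau$ in a regular space. Indeed, under regularity the quantifiers ``$\forall U\in RO(X,x)$'' and ``$\forall U\in O(X,x)$'' appearing in the definitions of $(\cdot)_R^{\diamond}$ and $(\cdot)^{\diamond}$ range over identical families of neighborhoods, so the two membership conditions are literally the same. Hence, for each $A\subseteq X$ we have $cl_R^{\diamond}(A^c)=cl^{\diamond}(A^c)$, and therefore the condition $cl_R^{\diamond}(A^c)=A^c$ holds precisely when $cl^{\diamond}(A^c)=A^c$.

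Comparing the two defining families directly then gives $\tau_R^{\diamond}=\{A\subseteq X : cl_R^{\diamond}(A^c)=A^c\}=\{A\subseteq X : cl^{\diamond}(A^c)=A^c\}=\tau^{\diamond}$, which is the desired conclusion. I do not anticipate any genuine obstacle here, as the statement is a purely formal consequence of the operator-level coincidence established earlier; the only point deserving explicit mention is that a topology generated by a Kuratowski closure operator is completely determined by that operator, so that the equality of the operators forces the equality of the induced topologies.
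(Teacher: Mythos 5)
Your top-level argument is exactly the paper's: the paper disposes of this corollary by citing the operator-level coincidence $cl_R^{\diamond}(A)=cl^{\diamond}(A)$ in regular spaces, and your observation that a Kuratowski closure operator determines its topology uniquely is the (implicit) remaining step. So in structure the proposal matches the paper's proof.

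However, the justification you supply for the operator coincidence is not correct as stated, and you should be aware that the same flaw appears in the paper's Corollary \ref{15}. It is \emph{not} true that $RO(X,x)=O(X,x)$ in a regular space, so the two quantifiers do not ``range over identical families of neighborhoods'': in $\mathbb{R}$ with the usual (metrizable, hence regular) topology, the set $(0,1)\cup(1,2)$ is open but not regular open, since $int(cl((0,1)\cup(1,2)))=(0,2)$. What regularity actually gives is that the regular open sets form a neighborhood \emph{base} at each point: given open $U\ni x$, choose open $V$ with $x\in V\subseteq cl(V)\subseteq U$; then $W:=int(cl(V))$ is regular open with $x\in W\subseteq U$. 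The coincidence $A_R^{\diamond}=A^{\diamond}$ then requires one more step, namely the heredity property of the primal: if $x\in A_R^{\diamond}$ and $U\in O(X,x)$, pick such a $W\in RO(X,x)$ with $W\subseteq U$; then $A^c\cup U^c\subseteq A^c\cup W^c\in\mathcal{P}$, so $A^c\cup U^c\in\mathcal{P}$ by condition $(b)$ of the definition of a primal, giving $x\in A^{\diamond}$. Combined with Theorem 3.1$(a)$ ($A^{\diamond}\subseteq A_R^{\diamond}$, which holds in any space), this yields the coincidence. With that repair your proof is complete; without it, the step ``the two membership conditions are literally the same'' is false.
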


\begin{proof}
It is obvious from Corollary \ref{17}.
\end{proof}

\begin{corollary}
Let $X$ be a topological space and $A\subseteq X.$ If $X$ is a compact Hausdorff space, then the topologies $\tau_R^{\diamond}$ and $\tau^{\diamond}$ coincide.  
\end{corollary}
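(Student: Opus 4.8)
The plan is to reduce the statement to the already-established agreement of the two closure operators on a compact Hausdorff space. Recall that $\tau^{\diamond}_R = \{A \subseteq X : cl^{\diamond}_R(A^c) = A^c\}$, and that $\tau^{\diamond}$ is defined in the analogous way as $\{A \subseteq X : cl^{\diamond}(A^c) = A^c\}$. Thus a set lies in either topology precisely when its complement is a fixed point of the corresponding closure operator, and equality of the two topologies will follow at once from pointwise equality of the two operators on all subsets of $X$.

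First I would invoke Corollary \ref{17}, which asserts that when $X$ is compact Hausdorff we have $cl^{\diamond}_R(B) = cl^{\diamond}(B)$ for every $B \subseteq X$. (This in turn rests on Corollary \ref{16}, namely $B^{\diamond}_R = B^{\diamond}$ on such spaces, which holds because every compact Hausdorff space is regular.) Given this, for an arbitrary $A \subseteq X$ I would argue that $A \in \tau^{\diamond}_R$ is equivalent to $cl^{\diamond}_R(A^c) = A^c$, which by Corollary \ref{17} is equivalent to $cl^{\diamond}(A^c) = A^c$, i.e.\ to $A \in \tau^{\diamond}$. Hence the two defining families coincide and $\tau^{\diamond}_R = \tau^{\diamond}$.

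An equivalent and even shorter route is to note that every compact Hausdorff space is regular and then simply cite the preceding corollary, which already establishes that $\tau^{\diamond}_R$ and $\tau^{\diamond}$ coincide on regular spaces. Either way the conclusion is a direct consequence of a previously proved result.

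I do not anticipate any genuine obstacle here: the statement is a formal corollary of Corollary \ref{17} (or, through regularity, of the regular-space corollary). The only point meriting a moment's care is making explicit that both $\tau^{\diamond}_R$ and $\tau^{\diamond}$ are generated from their respective closure operators in exactly the same manner, so that pointwise equality of the operators transfers verbatim to equality of the closed-set (fixed-point) families and therefore to the open-set families. Once that observation is recorded, the proof is immediate.
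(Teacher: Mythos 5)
Your proposal is correct and matches the paper's argument: the paper's proof is exactly your ``shorter route,'' namely that every compact Hausdorff space is regular and the preceding corollary handles regular spaces. Your more explicit unwinding via Corollary \ref{17} (pointwise equality of $cl^{\diamond}_R$ and $cl^{\diamond}$ transferring to equality of the fixed-point families) is the same reduction spelled out in detail, and correctly identifies the only point needing care.
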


\begin{proof}
It is obvious since every compact Hausdorff space is regular.
\end{proof}

\begin{theorem} \label{tausubset}
Let $(X,\tau,\mathcal{P})$ be a primal topological space. Then, the following statements hold:
\\

$a)$ $\tau_{\delta}\subseteq \tau^{\diamond}_R,$ where $\tau_{\delta}$ is the family of all $\delta$-open sets in a topological space $(X,\tau).$
\\ 

$b)$ $\tau^{\diamond}_R\subseteq \tau^{\diamond}.$
\end{theorem}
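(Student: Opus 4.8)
The plan is to work entirely through the closed-set description of the topology $\tau^{\diamond}_R$. By definition, $A\in\tau^{\diamond}_R$ precisely when $cl^{\diamond}_R(A^c)=A^c$, and since $cl^{\diamond}_R(A^c)=A^c\cup(A^c)^{\diamond}_R$, this condition is equivalent to the single inclusion $(A^c)^{\diamond}_R\subseteq A^c$. Thus a set $A$ is $\tau^{\diamond}_R$-open iff its complement satisfies $(A^c)^{\diamond}_R\subseteq A^c$, and the analogous statement holds for $\tau^{\diamond}$ with $(\cdot)^{\diamond}$ in place of $(\cdot)^{\diamond}_R$. Both inclusions then follow by comparing the two operators on complements, using only parts $(a)$ and $(b)$ of Theorem \ref{1}.

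For part $(a)$, I would start with a $\delta$-open set $A$, so that $A^c\in\delta C(X)$. Theorem \ref{1}$(b)$ then gives $(A^c)^{\diamond}_R\subseteq A^c$ directly, whence $cl^{\diamond}_R(A^c)=A^c\cup(A^c)^{\diamond}_R=A^c$ and therefore $A\in\tau^{\diamond}_R$. This shows $\tau_{\delta}\subseteq\tau^{\diamond}_R$.

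For part $(b)$, I would take $A\in\tau^{\diamond}_R$, which by the reformulation above means $(A^c)^{\diamond}_R\subseteq A^c$. Theorem \ref{1}$(a)$ supplies $(A^c)^{\diamond}\subseteq(A^c)^{\diamond}_R$, so chaining the two inclusions yields $(A^c)^{\diamond}\subseteq A^c$. Hence $cl^{\diamond}(A^c)=A^c\cup(A^c)^{\diamond}=A^c$, so $A\in\tau^{\diamond}$, giving $\tau^{\diamond}_R\subseteq\tau^{\diamond}$.

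I do not anticipate a genuine obstacle here: all the substantive work has already been carried out in Theorem \ref{1}, where the inclusion $A^{\diamond}\subseteq A^{\diamond}_R$ and the containment $A^{\diamond}_R\subseteq A$ for $\delta$-closed $A$ were established. The only care needed is the routine reformulation of openness in each topology as an inclusion of the relevant operator applied to the complement; once that is in place, both parts are one-line consequences. Together the two inclusions place $\tau^{\diamond}_R$ between $\tau_{\delta}$ and $\tau^{\diamond}$, which is consistent with the pointwise bound $A^{\diamond}\subseteq A^{\diamond}_R$ recorded in Theorem \ref{1}$(a)$.
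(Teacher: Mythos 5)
Your proposal is correct and follows essentially the same route as the paper: part $(a)$ reduces to Theorem \ref{1}$(b)$ applied to the $\delta$-closed complement, and part $(b)$ chains Theorem \ref{1}$(a)$ with the inclusion $(A^c)^{\diamond}_R\subseteq A^c$ coming from the definition of $\tau^{\diamond}_R$. No gaps.
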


\begin{proof}
$a)$ Let $A\in\tau_{\delta}.$ Our aim is to show that $A\in \tau^{\diamond}_R.$
\\

$
\left.\begin{array}{r}
A\in\tau_{\delta}\Rightarrow A^c\in \delta C(X)\Rightarrow (A^c)^{\diamond}_R\subseteq A^c\Rightarrow  A^c\cup(A^c)^{\diamond}_R=A^c \\ cl^{\diamond}_R(A^c)=A^c\cup (A^c)^{\diamond}_R
\end{array}\right\}\Rightarrow cl^{\diamond}_R(A^c)=A^c\Rightarrow A\in
\tau^{\diamond}_R.
$
\\

$b)$ Let $A\in \tau^{\diamond}_R.$ Our aim is to show that $A\in \tau^{\diamond}.$
$$
\begin{array}{rcl}
A\in \tau^{\diamond}_R & \Rightarrow & cl_R^{\diamond}(A^c)=A^c \\ \\
 & \Rightarrow & A^c\cup (A^c)_R^{\diamond}=A^c
 \\
 \\
 & \overset{\text{Theorem }\ref{1}}{\Rightarrow} & (A^c)^{\diamond}\subseteq(A^c)_R^{\diamond}\subseteq A^c
 \\
 \\
& \Rightarrow & (A^c)^{\diamond}\cup A^c =A^c
\\
\\
& \Rightarrow & cl^{\diamond}(A^c)=A^c
\\
\\
& \Rightarrow & A\in\tau^{\diamond}.\qedhere
\end{array}
$$

\end{proof}

\begin{corollary}
We have the following diagram from Theorem \ref{tausubset}.
\\
$$\begin{array}{ccccc}
 &  & \tau^{\diamond}\text{-}open &  &  \\
 & \nearrow &  & \nwarrow &    \\
\tau_R^{\diamond}\text{-}open &  &  &  & \tau\text{-}open \\
 & \nwarrow &  & \nearrow &  \\ 
 &  & \tau_{\delta}\text{-open} &  & 
\end{array}$$
\end{corollary}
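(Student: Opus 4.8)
The plan is to read the displayed diagram as nothing more than a compact encoding of four set-inclusions among the four topologies $\tau_{\delta}$, $\tau_R^{\diamond}$, $\tau$, and $\tau^{\diamond}$, and then to justify each arrow separately. Two of the arrows are literal restatements of the theorem just proved, and the other two are standard facts, so the corollary is essentially an exercise in bookkeeping once the arrows are correctly identified. Concretely, the arrow from $\tau_R^{\diamond}$-open to $\tau^{\diamond}$-open records the inclusion $\tau_R^{\diamond}\subseteq\tau^{\diamond}$ of Theorem \ref{tausubset}(b), and the arrow from $\tau_{\delta}$-open to $\tau_R^{\diamond}$-open records $\tau_{\delta}\subseteq\tau_R^{\diamond}$ of Theorem \ref{tausubset}(a); together these give the left branch $\tau_{\delta}\subseteq\tau_R^{\diamond}\subseteq\tau^{\diamond}$.

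Next I would supply the right branch $\tau_{\delta}\subseteq\tau\subseteq\tau^{\diamond}$, whose two edges are not part of Theorem \ref{tausubset} and so must be argued (or cited) directly. The lower edge $\tau_{\delta}\subseteq\tau$ is the classical fact that every $\delta$-open set is open: a $\delta$-open set equals its $\delta$-interior, hence is a union of regular open sets, each of which is open, so the set itself is open. The upper edge $\tau\subseteq\tau^{\diamond}$ is the known feature of the original $(\cdot)^{\diamond}$-theory that $\tau^{\diamond}$ is finer than $\tau$; it follows because for $\tau$-closed $A$ and $x\notin A$ one may take $U=A^c\in O(X,x)$, whence $A^c\cup U^c=A^c\cup A=X\notin\mathcal{P}$, so $x\notin A^{\diamond}$, giving $A^{\diamond}\subseteq A$ and hence $cl^{\diamond}(A)=A$. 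This is exactly the statement available from Acharjee et al.\ \cite{aoi}, and it parallels Theorem \ref{1}(b) with ordinary closed sets in place of $\delta$-closed ones.

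Finally I would assemble the four verified inclusions into the displayed diagram, placing $\tau_{\delta}$ at the bottom as the coarsest topology, $\tau^{\diamond}$ at the top as the finest, and $\tau_R^{\diamond}$ and $\tau$ as the two intermediate topologies, each wedged between $\tau_{\delta}$ and $\tau^{\diamond}$. Since every edge is an established inclusion, there is no genuine obstacle in the proof itself; the only subtlety worth emphasizing is that the diagram asserts \emph{no} direct comparison between the two middle nodes $\tau_R^{\diamond}$ and $\tau$, and indeed neither containment holds in general. Thus I would expect the substantive work surrounding this corollary to reside in the counterexamples witnessing that incomparability, rather than in the corollary, whose proof is purely a matter of citing Theorem \ref{tausubset} and two standard containments.
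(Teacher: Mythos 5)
Your proposal is correct and matches the paper's (implicit) argument: the paper simply cites Theorem \ref{tausubset} for the two arrows involving $\tau_R^{\diamond}$ and takes the standard containments $\tau_{\delta}\subseteq\tau\subseteq\tau^{\diamond}$ for granted, which you verify explicitly. Your added observation that the two middle nodes are incomparable is exactly what the paper's subsequent remark and example establish.
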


\begin{remark}
The converses of the implications given in above diagram need not be true as shown by the following examples. Also, the notions of $\tau_R^{\diamond}$-open and $\tau$-open are independent with each other.
\end{remark}

\begin{example}
Let $X=\{a,b,c\}$ with the topology $\tau=\{\emptyset,X,\{a,b\},\{b,c\},\{b\}\}$ and let $\mathcal{P}=\{\emptyset,\{a\},\{b\},\{a,b\}\}.$ Simple calculations show that $\tau_{\delta}=\{\emptyset,X\},$  $\tau_R^{\diamond}=\{\emptyset,X,\{c\},\{a,c\},\{b,c\}\},$ and $\tau^{\diamond}=2^X.$ 
\\

$1)$ The set $\{c\}$ is $\tau_R^{\diamond}$-open but not $\tau_{\delta}$-open. 
\\

$2)$ The set $\{a\}$ is $\tau^{\diamond}$-open but not $\tau_R^{\diamond}$-open.
\\

$3)$ The set $\{c\}$ is $\tau_R^{\diamond}$-open but not $\tau$-open.
\\

$4)$ The set $\{b\}$ is $\tau$-open but not $\tau_R^{\diamond}$-open.
\end{example}

\begin{theorem}\label{3}
Let $(X,\tau,\mathcal{P})$ be a primal topological space and $A\subseteq X.$ Then, the following statements hold:\\

$a)$ $A\in\tau^{\diamond}_R$ if and only if for all $x$ in $A,$ there exists a regular open set $U$ containing $x$ such that $U^c\cup A\notin\mathcal{P},$\\

$b)$ if $A\notin \mathcal{P},$ then $A\in\tau^{\diamond}_R.$
\end{theorem}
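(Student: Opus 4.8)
The plan is to derive (a) by peeling back the definitions one layer at a time and then to obtain (b) as an immediate consequence of (a).

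For part (a) I would start from the definition $\tau^{\diamond}_R=\{A\subseteq X: cl^{\diamond}_R(A^c)=A^c\}$ and successively rewrite the membership condition. Since $cl^{\diamond}_R(A^c)=A^c\cup(A^c)^{\diamond}_R$, the equality $cl^{\diamond}_R(A^c)=A^c$ is equivalent to the inclusion $(A^c)^{\diamond}_R\subseteq A^c$. I would then negate this inclusion pointwise: it fails exactly when some point lies in $(A^c)^{\diamond}_R$ yet outside $A^c$, that is, inside $A$. Hence $A\in\tau^{\diamond}_R$ is equivalent to the statement that no $x\in A$ belongs to $(A^c)^{\diamond}_R$, i.e. $x\notin(A^c)^{\diamond}_R$ for every $x\in A$. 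The last step is to unwind the operator $(\cdot)^{\diamond}_R$ applied to $A^c$: by definition $x\in(A^c)^{\diamond}_R$ means $(A^c)^c\cup U^c=A\cup U^c\in\mathcal{P}$ for all $U\in RO(X,x)$, so its negation reads: there exists $U\in RO(X,x)$ with $U^c\cup A\notin\mathcal{P}$. Chaining these equivalences yields precisely the characterization in (a).

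For part (b) I would invoke (a). Assuming $A\notin\mathcal{P}$, fix an arbitrary $x\in A$. Since $X\in RO(X,x)$, taking $U=X$ gives $U^c\cup A=\emptyset\cup A=A\notin\mathcal{P}$, which supplies the regular open witness required by (a). Alternatively, for any $U\in RO(X,x)$ one has $A\subseteq U^c\cup A$, and the contrapositive form of the downward closure of a primal (Corollary, item (b)) promotes $A\notin\mathcal{P}$ to $U^c\cup A\notin\mathcal{P}$. Either way the condition in (a) is satisfied at every $x\in A$, whence $A\in\tau^{\diamond}_R$.

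The only place demanding care is the logical bookkeeping in (a): the double complementation (returning from $A^c$ to $A$ via $(A^c)^c=A$) together with the quantifier flip when negating the ``for all $U$'' clause must be tracked precisely, because the final characterization is phrased in terms of $A$ rather than $A^c$. Part (b) is then essentially immediate; the single observation worth recording is that $X$ is always a regular open neighborhood of any point, guaranteeing that a witness $U$ exists.
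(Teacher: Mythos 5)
Your proposal is correct and follows essentially the same route as the paper: part (a) is the same chain of equivalences (reduce $A\in\tau^{\diamond}_R$ to $(A^c)^{\diamond}_R\subseteq A^c$, then negate the membership condition pointwise with the quantifier flip), and part (b) uses the same witness $U=X\in RO(X,x)$ to apply (a). The alternative argument you sketch for (b) via the hereditary property of primals is also valid but not needed.
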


\begin{proof} $a)$ Let $A\in\tau^{\diamond}_R.$
$$\begin{array}{rcl}A\in\tau^{\diamond}_R& \Leftrightarrow & cl^{\diamond}_R(A^c)=A^c \\ \\
& \Leftrightarrow & A^c\cup (A^c)^{\diamond}_R=A^c
\\ \\
& \Leftrightarrow & (A^c)^{\diamond}_R\subseteq A^c
\\ \\
& \Leftrightarrow & A\subseteq ((A^c)^{\diamond}_R)^c
\\ \\
& \Leftrightarrow & (\forall x\in A)(x\notin (A^c)^{\diamond}_R)
\\ \\
& \Leftrightarrow & (\forall x\in  A)(\exists U\in RO(X,x))(U^c\cup (A^c)^c=U^c\cup A\notin\mathcal{P}).
\end{array}$$

$b)$ Let $A\notin\mathcal{P}$ and $x\in A.$ We will make use of $(a).$
\\

$\left.\begin{array}{r}
(U:=X)(x\in A)\Rightarrow (U\in RO(X,x))(A=U^c\cup A)  \\ 
 A\notin\mathcal{P} 
\end{array}\right\}\Rightarrow U^c\cup A \notin\mathcal{P}.$
\end{proof}

\begin{remark}
 The converse of Theorem \ref{3}(b) need not be always true as shown by the following example.   
\end{remark}

\begin{example}
Let $X=\{a,b,c\}$ with the topology $\tau=\{\emptyset,X,\{a\},\{b\},\{a,b\}\}$ and $\mathcal{P}=\{\emptyset,\{a\},\{b\},\{a,b\}\}.$ Simple calculations show that $\tau^{\diamond}_R=2^X.$ It is obvious that the set $\{b\}$ belongs to both $\tau_R^{\diamond}$ and $\mathcal{P}.$ 
\end{example}

\begin{theorem}\label{karsit}
Let $(X,\tau,\mathcal{P})$ be a primal topological space. Then, the following statements hold:\\

$a)$ if $\mathcal{P}=\emptyset,$ then $\tau^{\diamond}_R=2^X,$\\

$b)$ if $\mathcal{P}=2^X\setminus\{X\},$ then $\tau_{\delta}=\tau^{\diamond}_R.$\\
\end{theorem}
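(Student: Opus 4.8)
For part $(a)$, the plan is to appeal directly to Theorem~\ref{3}$(b)$. Since $\mathcal{P}=\emptyset$, no subset of $X$ lies in $\mathcal{P}$; in particular every $A\subseteq X$ satisfies $A\notin\mathcal{P}$, so Theorem~\ref{3}$(b)$ immediately gives $A\in\tau^{\diamond}_R$. As $A$ was arbitrary, this yields $\tau^{\diamond}_R=2^X$. Equivalently, one may observe that the defining requirement $A^c\cup U^c\in\mathcal{P}$ can never hold when $\mathcal{P}=\emptyset$, so $B^{\diamond}_R=\emptyset$ for every $B\subseteq X$; hence $cl^{\diamond}_R(A^c)=A^c\cup(A^c)^{\diamond}_R=A^c$ for every $A$, which is again the assertion that every set is $\tau^{\diamond}_R$-open.

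For part $(b)$, I would first recall that the inclusion $\tau_{\delta}\subseteq\tau^{\diamond}_R$ already holds in full generality by Theorem~\ref{tausubset}$(a)$, so only the reverse containment needs attention. Rather than argue the two inclusions separately, the cleaner route is to show that under $\mathcal{P}=2^X\setminus\{X\}$ the operator $(\cdot)^{\diamond}_R$ coincides with $\delta\text{-}cl$. The key observation is purely set-theoretic: for this particular primal, $A^c\cup U^c\in\mathcal{P}$ is equivalent to $A^c\cup U^c\neq X$, that is, to $(A\cap U)^c\neq X$, i.e. to $A\cap U\neq\emptyset$. Substituting this equivalence into the definition of $A^{\diamond}_R$ gives
$$A^{\diamond}_R=\{x\in X:(\forall U\in RO(X,x))(U\cap A\neq\emptyset)\},$$
which is exactly the regular-open-neighborhood description of $\delta\text{-}cl(A)$ recorded in the first (velichko) theorem of Section~2.

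With $A^{\diamond}_R=\delta\text{-}cl(A)$ in hand, it follows that $cl^{\diamond}_R(A)=A\cup A^{\diamond}_R=A\cup\delta\text{-}cl(A)=\delta\text{-}cl(A)$, so the Kuratowski closure $cl^{\diamond}_R$ is literally the $\delta$-closure. Consequently $A\in\tau^{\diamond}_R$ iff $cl^{\diamond}_R(A^c)=A^c$ iff $\delta\text{-}cl(A^c)=A^c$ iff $A^c\in\delta C(X)$ iff $A\in\delta O(X)=\tau_{\delta}$, giving $\tau^{\diamond}_R=\tau_{\delta}$. I expect the only genuinely substantive step to be the set-theoretic reduction of $A^c\cup U^c\in\mathcal{P}$ to $A\cap U\neq\emptyset$ for the maximal primal $2^X\setminus\{X\}$; once this identification is made, matching it against the stored characterization of $\delta\text{-}cl$ and unwinding the definition of $\tau^{\diamond}_R$ is entirely routine.
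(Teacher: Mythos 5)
Your proposal is correct. Part $(a)$ matches the paper: your second observation (that $\mathcal{P}=\emptyset$ forces $B^{\diamond}_R=\emptyset$ for every $B$, hence $cl^{\diamond}_R(A^c)=A^c$) is exactly the paper's argument, and the alternative appeal to Theorem~\ref{3}$(b)$ is equally valid. For part $(b)$ the paper and you rely on the same key reduction --- for the maximal primal $2^X\setminus\{X\}$, the condition $U^c\cup A\notin\mathcal{P}$ means $U^c\cup A=X$, i.e.\ $U\cap A^c=\emptyset$ --- but you package it differently: the paper keeps the general inclusion $\tau_{\delta}\subseteq\tau^{\diamond}_R$ from Theorem~\ref{tausubset} and proves only the reverse containment pointwise via the characterization in Theorem~\ref{3}$(a)$, whereas you establish the stronger operator identity $A^{\diamond}_R=\delta\text{-}cl(A)$ for every $A$, from which $cl^{\diamond}_R=\delta\text{-}cl$ and hence $\tau^{\diamond}_R=\tau_{\delta}$ follow in one stroke. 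Your formulation buys a cleaner statement (the whole primal structure collapses to the $\delta$-closure operator, not merely to the $\delta$-topology) and dispenses with the separate citation of Theorem~\ref{tausubset}; the paper's version is marginally more economical in that it only verifies the one inclusion that is not already known. Both are complete and correct.
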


\begin{proof}
$a)$ We have always $\tau_R^{\diamond}\subseteq 2^X\ldots (1).$ Now, let $A\in 2^X.$ Our aim is to show that $A\in \tau_R^{\diamond}.$ 
\\
$
\left.\begin{array}{r}
cl^{\diamond}_R(A^c)=A^c\cup (A^c)^{\diamond}_R  \\
\mathcal{P}=\emptyset \Rightarrow (A^c)_R^{\diamond}=\emptyset
\end{array}\right\}\Rightarrow cl^{\diamond}_R(A^c)=A^c\Rightarrow A\in
\tau^{\diamond}_R
$
\\

Then, we have $2^X\subseteq \tau^{\diamond}_R\ldots (2)$
\\

$(1),(2)\Rightarrow \tau^{\diamond}_R=2^X$.
\\


$b)$ We have always $\tau_{\delta}\subseteq\tau^{\diamond}_R$ from Theorem \ref{tausubset}.  Now, we will prove that $\tau^{\diamond}_R\subseteq\tau_{\delta}.$ Let $A\in\tau^{\diamond}_R.$
\\
$
\left.\begin{array}{r}
   A\in\tau_R^{\diamond} \overset{\text{Theorem }\ref{3}} {\Rightarrow} (\forall x\in A)(\exists U\in RO(X,x))(U^c\cup A\notin \mathcal{P})  \\ \mathcal{P}=2^X\setminus \{X\}
\end{array}\right\}\Rightarrow 
$
\\
$
\begin{array}{l}
\Rightarrow (\forall x\in A)(\exists U\in RO(X,x))(U^c\cup A=X)
\end{array}
$
\\
$
\begin{array}{l}
\Rightarrow (\forall x\in A)(\exists U\in RO(X,x))(U\cap A^c=\emptyset)
\end{array}
$
\\
$
\begin{array}{l}
\Rightarrow (\forall x\in A)(x\notin \delta\text{-}cl(A^c)=(\delta\text{-}int(A))^c)
\end{array}
$
\\
$
\begin{array}{l}
\Rightarrow (\forall x\in A)(x\in \delta\text{-}int(A))
\end{array}
$
\\
$
\begin{array}{l}
\Rightarrow A=\delta\text{-}int(A)
\end{array}
$
\\
$
\begin{array}{l}
\Rightarrow A\in\tau_{\delta}.
\end{array}
$
\end{proof}

\begin{remark}
The converses of Theorem \ref{karsit}(a) and Theorem \ref{karsit}(b) need not be true as shown by the following examples. 
\end{remark}

\begin{example}
Let $X=\{a,b,c\}$ with the topology $\tau=\{\emptyset,X,\{a\},\{b\},\{a,b\}\}$ and $\mathcal{P}=\{\emptyset,\{a\},\{b\},\{a,b\}\}.$ Simple calculations show that $\tau^{\diamond}_R=2^X$, but $\mathcal{P}\neq \emptyset.$
\end{example}

\begin{example}
Let $X=\{a,b,c\}$ with the topology $\tau=\{\emptyset,X,\{a\},\{b\},\{a,b\}\}$ and $\mathcal{P}=2^X\setminus \{X,\{a,b\}\}.$ Simple calculations show that $\tau=\tau_{\delta}=\tau^{\diamond}_R$, but $\mathcal{P}\neq 2^X\setminus\{X\}.$
\end{example}

\begin{theorem} \label{2}
Let $(X,\tau,\mathcal{P})$ be a primal topological space. Then, the family $\mathcal{B}=\{T\cap P|(T\in\tau_{\delta})(P\notin \mathcal{P})\}$ is a base for the topology $\tau^{\diamond}_R$ on $X.$
\end{theorem}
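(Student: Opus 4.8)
To show $\mathcal{B}=\{T\cap P \mid (T\in\tau_\delta)(P\notin\mathcal{P})\}$ is a base for $\tau^\diamond_R$, I must verify two things: first that every member of $\mathcal{B}$ is $\tau^\diamond_R$-open, and second that every $\tau^\diamond_R$-open set is a union of members of $\mathcal{B}$. The plan is to treat each direction separately, leaning heavily on the characterization in Theorem \ref{3}(a), namely that $A\in\tau^\diamond_R$ iff for every $x\in A$ there exists $U\in RO(X,x)$ with $U^c\cup A\notin\mathcal{P}$.

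For the inclusion $\mathcal{B}\subseteq\tau^\diamond_R$, I would first observe that $\tau^\diamond_R$ is a genuine topology (it arises from the Kuratowski closure operator $cl^\diamond_R$), so it suffices to show each $T\cap P\in\mathcal{B}$ is $\tau^\diamond_R$-open. Since $\tau_\delta\subseteq\tau^\diamond_R$ by Theorem \ref{tausubset}(a), every $T\in\tau_\delta$ is already $\tau^\diamond_R$-open. For the factor $P\notin\mathcal{P}$, Theorem \ref{3}(b) gives immediately that $P\in\tau^\diamond_R$. Hence $T\cap P$ is the intersection of two $\tau^\diamond_R$-open sets and is therefore $\tau^\diamond_R$-open. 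This direction should be essentially immediate.

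The substantive direction is showing that every $A\in\tau^\diamond_R$ is a union of sets of the form $T\cap P$. The plan is to fix $A\in\tau^\diamond_R$ and, for each point $x\in A$, manufacture a basic set containing $x$ and sitting inside $A$. By Theorem \ref{3}(a), for each $x\in A$ there is $U_x\in RO(X,x)$ with $U_x^c\cup A\notin\mathcal{P}$. The natural candidates are $T:=U_x$, which lies in $\tau_\delta$ since every regular open set is $\delta$-open, and $P:=U_x^c\cup A$, which satisfies $P\notin\mathcal{P}$ by construction. I would then check that $x\in U_x\cap(U_x^c\cup A)\subseteq A$: the point $x$ lies in $U_x$ and, since $x\in A$, also in $U_x^c\cup A$; and the distributive identity $U_x\cap(U_x^c\cup A)=(U_x\cap U_x^c)\cup(U_x\cap A)=U_x\cap A\subseteq A$ confines the basic set inside $A$. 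Taking the union of these basic sets over all $x\in A$ recovers $A$ exactly.

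The main obstacle, and the place to be careful, is confirming that the manufactured $P=U_x^c\cup A$ genuinely lies outside $\mathcal{P}$ rather than merely being a convenient-looking set: this is exactly the content extracted from Theorem \ref{3}(a), so the proof hinges on applying that characterization in the correct direction. A secondary point worth stating explicitly is why $U_x\in\tau_\delta$ (regular open implies $\delta$-open, since a regular open set is the union of the single-element family $\{U_x\}\subseteq RO(X)$ and thus equals its own $\delta$-interior). Once these two facts are in place, the union argument closes the proof, and I would not expect any further complications from the fact that $A$ itself might or might not belong to $\mathcal{P}$.
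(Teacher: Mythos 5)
Your proposal is correct and follows essentially the same route as the paper: both directions use Theorem \ref{3} in exactly the same way, and your basic set $U_x\cap(U_x^c\cup A)$ is precisely the witness $B:=U\cap(U^c\cup A)$ used in the paper's proof. No gaps.
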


\begin{proof}
Let $B\in\mathcal{B}.$ 
\\
$
\left.\begin{array}{r}
B\in\mathcal{B}\Rightarrow (\exists T\in\tau_{\delta})(\exists P\notin\mathcal{P})(B=T\cap P)
\\
\tau_{\delta}\subseteq \tau_R^{\diamond}
\end{array}\right\}\overset{\text{Theorem }\ref{3}}{\Rightarrow} (T,P\in\tau_R^{\diamond})(B=T\cap P)\Rightarrow B\in \tau_R^{\diamond}
$
\\

Then, we have $\mathcal{B}\subseteq \tau_R^{\diamond}.$ Now, let $A\in\tau_R^{\diamond}$ and $x\in A.$ Our aim is to find $B\in\mathcal{B}$ such that $x\in B\subseteq A.$
\\
$
\left.\begin{array}{r}
x\in A\in \tau^{\diamond}_R\overset{\text{Theorem }\ref{3}}{\Rightarrow}(\exists U\in RO(X,x))(U^c\cup A\notin\mathcal{P})
\\
B:=U\cap (U^c\cup A)
\end{array}\right\}\Rightarrow (B\in\mathcal{B})(x\in B\subseteq A)
$

Hence, $\mathcal{B}$ is a base for the topology  $\tau_R^{\diamond}$ on $X.$
\end{proof}


\begin{theorem}
Let $(X,\tau,\mathcal{P})$ and $(X,\tau,\mathcal{Q})$ be two primal topological spaces. If $\mathcal{P}\subseteq \mathcal{Q},$ then $\tau_{R(\mathcal{Q})}^{\diamond}\subseteq \tau_{R(\mathcal{P})}^{\diamond}.$
\end{theorem}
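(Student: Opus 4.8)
The plan is to reduce the statement to a monotonicity property of the operator $(\cdot)_R^{\diamond}$ in its primal argument, and then to transfer this monotonicity through the Kuratowski closure operator $cl_R^{\diamond}$ to the induced topologies. The crucial—and perhaps counterintuitive—feature is the reversal of the inclusion: enlarging the primal enlarges the diamond operator, hence enlarges $cl_R^{\diamond}$, and therefore \emph{shrinks} the family of closed sets, which coarsens the associated topology. Keeping track of this order reversal is the only point that demands any attention.

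First I would establish the operator-level monotonicity: for every $A\subseteq X$, the inclusion $A_{R(\mathcal{P})}^{\diamond}\subseteq A_{R(\mathcal{Q})}^{\diamond}$ holds whenever $\mathcal{P}\subseteq\mathcal{Q}$. This is immediate from the definition of $(\cdot)_R^{\diamond}$: if $x\in A_{R(\mathcal{P})}^{\diamond}$, then $A^c\cup U^c\in\mathcal{P}$ for every $U\in RO(X,x)$, and since $\mathcal{P}\subseteq\mathcal{Q}$ each such set also lies in $\mathcal{Q}$, giving $x\in A_{R(\mathcal{Q})}^{\diamond}$. Feeding this into $cl_R^{\diamond}(A)=A\cup A_R^{\diamond}$ at once yields $cl_{R(\mathcal{P})}^{\diamond}(A)\subseteq cl_{R(\mathcal{Q})}^{\diamond}(A)$ for all $A$.

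With these inclusions in hand I would finish via the definition $\tau_R^{\diamond}=\{A\subseteq X: cl_R^{\diamond}(A^c)=A^c\}$. Take $A\in\tau_{R(\mathcal{Q})}^{\diamond}$, so that $cl_{R(\mathcal{Q})}^{\diamond}(A^c)=A^c$. Since $cl_R^{\diamond}$ is always extensive (the inclusion $A^c\subseteq cl_R^{\diamond}(A^c)$ holds unconditionally) and monotone in the primal by the previous step, I obtain $A^c\subseteq cl_{R(\mathcal{P})}^{\diamond}(A^c)\subseteq cl_{R(\mathcal{Q})}^{\diamond}(A^c)=A^c$, forcing $cl_{R(\mathcal{P})}^{\diamond}(A^c)=A^c$ and hence $A\in\tau_{R(\mathcal{P})}^{\diamond}$, as required.

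Alternatively, and perhaps more economically, I could bypass the operator entirely and argue with the base-point criterion of Theorem \ref{3}(a): if $A\in\tau_{R(\mathcal{Q})}^{\diamond}$, then each $x\in A$ admits $U\in RO(X,x)$ with $U^c\cup A\notin\mathcal{Q}$; the inclusion $\mathcal{P}\subseteq\mathcal{Q}$ upgrades this to $U^c\cup A\notin\mathcal{P}$, and Theorem \ref{3}(a) read in the converse direction returns $A\in\tau_{R(\mathcal{P})}^{\diamond}$. There is no genuine obstacle in either route; the entire content is the contrapositive observation that $\mathcal{P}\subseteq\mathcal{Q}$ makes ``not in $\mathcal{Q}$'' imply ``not in $\mathcal{P}$,'' together with remembering that the correspondence from primals to their induced topologies is inclusion-reversing.
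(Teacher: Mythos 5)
Your proposal is correct, and your ``alternative'' route via the pointwise criterion of Theorem \ref{3}(a) is exactly the paper's proof: the authors take $A\in\tau_{R(\mathcal{Q})}^{\diamond}$, extract for each $x\in A$ a regular open $U$ with $U^c\cup A\notin\mathcal{Q}$, and use $\mathcal{P}\subseteq\mathcal{Q}$ to conclude $U^c\cup A\notin\mathcal{P}$. Your primary route through the monotonicity of $(\cdot)_R^{\diamond}$ and $cl_R^{\diamond}$ in the primal argument is an equally valid repackaging of the same contrapositive observation, so there is nothing substantively different to compare.
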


\begin{proof}
Let $A\in \tau_{R(\mathcal{Q})}^{\diamond}.$
\\
$\left.\begin{array}{r}
A\in \tau_{R(\mathcal{Q})}^{\diamond}\Rightarrow (\forall x\in A)(\exists U\in RO(X,x))(U^c\cup A\notin\mathcal{Q})
\\
\mathcal{P}\subseteq \mathcal{Q}
\end{array}
\right\}\Rightarrow $
\\
$
\begin{array}{l}
\Rightarrow (\forall x\in A)(\exists U\in RO(X,x))(U^c\cup A\notin\mathcal{P})
\end{array}
$
\\
$
\begin{array}{l}
\Rightarrow A\in \tau_{R(\mathcal{P})}^{\diamond}.
\end{array}
$
\end{proof}


\section{Conclusion}
In this study, we defined and studied two new operators showed with $(\cdot)_R^{\diamond}$ and $cl_R^{\diamond}(\cdot)$ via the notion of primal. While the first one is not a Kuratowski closure operator, the second one comes across as a Kuratowski closure operator. Hence, we obtained a new topology $\tau_R^{\diamond}$ that is finer than $\tau_{\delta}.$ On the other hand,  we showed that the notions of $\tau_R^{\diamond}$-open and $\tau$-open are independent. Moreover, we obtained a base for this new topology $\tau_R^{\diamond}$ and proved several fundamental results. Furthermore, we give not only some relationships but also several examples. We believe that this study will help researchers to upgrade and support further studies on primals.   
\\

{\bf Conflict of interest:} The authors declare that there is no conflict of interest.

\end{document}